\newtheorem{theorem}{Theorem}[section]
\newtheorem{lemma}[theorem]{Lemma}
\newtheorem{proposition}[theorem]{Proposition}
\newtheorem{remark}[theorem]{Remark}
\numberwithin{equation}{section}
\newcommand{\pa}[2]{\varepsilon_{#2}(#1)}
\newcommand{\qand}{\hspace{0.5cm} \text{and} \hspace{0.5cm}}
\newcommand{\Aut}{\mbox{\rm Aut}}
\newcommand{\Z}{{\mathbb Z}}
\newcommand{\Q}{{\mathbb Q}}
\newcommand{\C}{{\mathbb C}}
\newcommand{\K}{{\mathbb K}}
\newcommand{\LL}{{\mathbb L}}
\newcommand{\tr}{{\rm tr}}
\newcommand{\Soc}{{\rm Soc}}
\newcommand{\matriz}[1]{\begin{array} #1 \end{array}}
\newcommand{\GEN}[1]{\left\langle #1 \right\rangle}
\newcommand{\diag}{\operatorname{diag}}
\newcommand{\Cl}{\operatorname{Cl}}
\newcommand{\Exp}{\operatorname{Exp}}
\newcommand{\x}{\delta}
\newcommand{\y}{\gamma}
\newenvironment{proofof}{\par\bigskip\noindent{\it Proof of }}{\qed\par\bigskip}
\title[Zassenhaus Conjecture for cyclic-by-nilpotent groups]{On the Zassenhaus Conjecture for certain cyclic-by-nilpotent groups}
\author{Mauricio Caicedo}
\address{Department of Mathematics, Vrije Universiteit Brussel,  Pleinlaan 2, 1050 Brussels, Belgium}
\email{mcaicedo@vub.ac.be}
\author{\'Angel del R\'io}
\address{Departamento de Matem\'aticas, Universidad de Murcia,  30100 Murcia, Spain}
\email{adelrio@um.es}
\thanks{2010 Mathematics Subject Classification. 16U60, 16S34, 20C05, 20C10.\\
Key words and phrases. Integral group ring, groups of units, Zassenhaus conjecture.\\This research is partially supported by the Research Foundation Flanders (FWO - Vlaanderen),
the Spanish Government under Grant MTM2016-77445-P with ``Fondos FEDER'' and, by Fundaci\'on S\'eneca of Murcia under Grant 19880/GERM/15}
\begin{document}

\begin{abstract}
Hans Zassenhaus conjectured that every torsion unit of the integral group ring of a finite group $G$ is conjugate within the rational group algebra to an element of the form $\pm g$ with $g\in G$. 
This conjecture has been disproved recently for metabelian groups, by Eisele and Margolis. However it is known to be true  for many classes of solvable groups, as for example nilpotent groups, cyclic-by-abelian groups and groups having a Sylow subgroup with abelian complement.  
On the other hand, the conjecture remains open for the class of supersolvable groups. 
This paper is a contribution to this question. More precisely, we study the conjecture for the class of cyclic-by-nilpotent groups with special attention to the class of cyclic-by-Hamiltonian groups. 
We prove the conjecture for cyclic-by-$p$-groups and  some cyclic-by-Hamiltonian groups.
\end{abstract}

\maketitle

\section{Introduction}

In this paper $G$ is a finite group and $\Z G$ denotes the group ring of $G$ with coefficients in $\Z$.
The study of the group of units of $\Z G$ has been an active field of research since the seminal work of Graham Higman \cite{Higman1940Thesis}.
Let $V(\Z G)$ denote the group formed by the units of $\Z G$ of augmentation $1$. 
Then the group of units of $\Z G$ is $\pm V(\Z G)$ and therefore we can restrict our attention to the group $V(\Z G)$.
One of the main aims in this area, which is partially motivated by the Isomorphism Problem,  is the description of the torsion units of $\Z G$.
In the 1970's, Hans Zassenhaus conjectured that every torsion element $u$ of $V(\Z G)$ is conjugate  within the rational group algebra $\Q G$ to an element $g$ of $G$ \cite{Zassenhaus1974}.
In such case one says that $u$ and $g$ are rationally conjugate. 
Eisele and Margolis have shown recently a counterexample to the Zassenhaus Conjecture \cite{EiseleMargolis2017}. Although the Zassenhaus Conjecture is not the ``correct solution'' to the original problem of describing the torsion units of $\Z G$, yet it provides an answer for many solvable groups and some non-solvable groups (see  \cite{MargolisdelRioSurvey} for a recent survey). 
It would be nice to have a classification of the groups for which the Zassenhaus Conjecture holds. This looks out of reach at this moment, but proving or disproving the Zassenhaus Conjecture for large classes of groups is a way to converge to  this classification. The counterexample of Eisele and Margolis shows that the Zassenhaus Conjecture fails for metabelian groups. However the conjecture holds, among others, for nilpotent groups \cite{Weiss1991}, cyclic-by-abelian groups \cite{CaicedoMargolisdelRio2013} and for groups having a Sylow subgroup with abelian complement \cite{Hertweck2006}. This information, as well as some ideas in the proofs of these results suggest that the Zassenhaus Conjecture might hold for supersolvable groups or at least for cyclic-by-nilpotent groups.
The aim of this paper is to make some contributions in that direction. 
Actually, our original motivation was to study the Zassenhaus Conjecture for cyclic-by-Hamiltonian groups, a question posed by Kimmerle to us in a private conversation.

Another aim of this paper is to consider a special case of the Zassenhaus Conjecture which appeared as Research Problem 35 in \cite{Sehgal1993}.
If $N$ is a normal subgroup of $G$ then $V(\Z G,N)$ denotes the group of units of $\Z G$ mapped to $1$ by the natural homomorphism $\Z G\rightarrow \Z(G/N)$. 
Sehgal's Problem asks whether every torsion  element of $V(\Z G,N)$ is rationally conjugate to an element of $G$ provided $N$ is nilpotent. 
Actually, the torsion unit in the counterexample of Eisele and Margolis is a negative solution to Sehgal's Problem for some group $G$ and a normal subgroup $N$ of $G$ with $N$ and $G/N$ abelian.
This raises the question of when Sehgal's Problem has a positive solution.

Our main results consists in proving the Zassenhaus Conjecture for two subclasses of the class of cyclic-by-nilpotent groups and to give a positive solution for Sehgal's Problem for a special case. More precisely we prove the following results:

\begin{theorem}\label{G'OneNonCyclic}
	Let $N$ be a nilpotent subgroup of $G$ containing $G'$ and suppose that $N_{p'}$ is abelian and $G'_{p'}$ is cyclic for some prime $p$. Then every torsion element of $V(\Z G,N)$ is rationally conjugate to an element of $N$.
\end{theorem}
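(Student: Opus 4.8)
The plan is to exploit the two quotients of $G$ singled out by the hypotheses and to treat the $p$-part and $p'$-part of a torsion unit separately. Since $G'\subseteq N$, the subgroup $N$ is normal in $G$ with $G/N$ abelian, and the nilpotency of $N$ gives $N=N_p\times N_{p'}$ with $N_p$ its Sylow $p$-subgroup and $N_{p'}$ its Hall $p'$-subgroup, both characteristic in $N$ and hence normal in $G$. Computing commutators, $(G/N_p)'\cong G'/(G'\cap N_p)=G'/G'_p\cong G'_{p'}$ is cyclic, so $G/N_p$ is cyclic-by-abelian and \cite{CaicedoMargolisdelRio2013} applies; dually $(G/N_{p'})'\cong G'_p$ is a $p$-group. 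Fix a torsion unit $u\in V(\Z G,N)$ of order $n$. Because $G/N$ is abelian, every conjugate of a given $g\in G$ lies in the coset $gN$; hence by the Marciniak--Ritter--Sehgal--Weiss criterion it suffices to prove that $\varepsilon_g(u^d)=0$ for every divisor $d$ of $n$ and every $g\in G\setminus N$, together with the nonnegativity of the remaining partial augmentations, to conclude that $u$ is rationally conjugate to an element of $N$.

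Write $u=u_pu_{p'}$ for the commuting $p$- and $p'$-parts of $u$; both are powers of $u$ and so lie in $V(\Z G,N)$. Each pure part collapses in one of the two quotients. For $u_{p'}$: its image in $G/N_{p'}$ lies in $V(\Z(G/N_{p'}),N_p)$ with $N_p$ a normal $p$-subgroup, so reducing modulo $p$ it becomes a $p'$-torsion unit congruent to $1$ modulo the nilpotent relative augmentation ideal of $N_p$; hence it reduces to $1$ modulo $p$ and, being a $p'$-element, equals $1$, that is $u_{p'}\in V(\Z G,N_{p'})$. For $u_p$: its image in the cyclic-by-abelian quotient $G/N_p$ is, by \cite{CaicedoMargolisdelRio2013}, rationally conjugate to an element of $N/N_p\cong N_{p'}$; but that element is simultaneously a $p$-element and an element of the $p'$-group $N_{p'}$, so it is trivial and $u_p\in V(\Z G,N_p)$.

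These reductions let me control the partial augmentations of every power $u^d$ through the two quotients. By the previous paragraph the image of each $u^d$ in $G/N_{p'}$ equals that of $u_p^d$ and is therefore a $p$-element trivial modulo $N_p$, while its image in $G/N_p$ equals that of $u_{p'}^d$. On the cyclic-by-abelian side \cite{CaicedoMargolisdelRio2013} makes the image of $u^d$ in $G/N_p$ rationally conjugate into $N/N_p$, so the sum of the $\varepsilon_h(u^d)$ over each fibre of the class map $g^G\mapsto(gN_p)^{G/N_p}$ vanishes whenever $g\notin N$. On the other side the image of $u^d$ in $G/N_{p'}$ is a $p$-element trivial modulo the normal $p$-subgroup $N_p$, so the $p$-local theorem of Weiss \cite{Weiss1991} makes it conjugate into $N_p$ and yields the vanishing of the analogous fibre-sums for $g^G\mapsto(gN_{p'})^{G/N_{p'}}$ when $g\notin N$.

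The crux is to combine these two families of fibre-sums into the vanishing of the individual $\varepsilon_g(u^d)$ for $g\notin N$. Two elements of $G\setminus N$ lie in a common fibre over $G/N_p$ only if they are conjugate up to a factor in $N_p$, and in a common fibre over $G/N_{p'}$ only up to a factor in $N_{p'}$; since $N_p\cap N_{p'}=1$, elements merging in both quotients are tightly constrained, and it is exactly here that the hypotheses that $N_{p'}$ is abelian and $G'_{p'}$ is cyclic enter, to force that no two distinct $G$-classes outside $N$ merge simultaneously in both quotients. This injectivity converts the two sets of vanishing fibre-sums into $\varepsilon_g(u^d)=0$ for all $g\notin N$ and all $d\mid n$. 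Reconciling the fibre structures of the two quotients is the main obstacle; once it is settled, the nonnegativity of the partial augmentations inside $N$, inherited from the quotient conjugacies, together with the Marciniak--Ritter--Sehgal--Weiss criterion produces a single rational conjugacy of $u$ into $N$.
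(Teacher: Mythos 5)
There is a genuine gap, and it sits exactly where you place the ``crux''. Your reductions are fine as far as they go: $N=N_p\times N_{p'}$ with both factors normal, $(G/N_p)'\cong G'_{p'}$ cyclic so that \cite{CaicedoMargolisdelRio2013} applies to $G/N_p$, and the localizations $u_p\in V(\Z G,N_p)$, $u_{p'}\in V(\Z G,N_{p'})$. But what these give you is only that, for each power $u^d$, the \emph{sums} of partial augmentations over the fibres of $\Cl(G)\to\Cl(G/N_p)$ and of $\Cl(G)\to\Cl(G/N_{p'})$ behave correctly. Your claimed bridge --- that the hypotheses force no two classes outside $N$ to merge in both quotients, and that this ``injectivity'' converts the two families of vanishing fibre-sums into vanishing of each $\varepsilon_g(u^d)$ --- fails on two counts. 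First, it is never proved, and it is not clear it is even true. Second, and more seriously, it would not suffice: if four classes $X_{11},X_{12},X_{21},X_{22}$ have the $X_{i1},X_{i2}$ merging in one quotient and the $X_{1j},X_{2j}$ in the other, the assignment $\varepsilon_{X_{11}}=\varepsilon_{X_{22}}=1$, $\varepsilon_{X_{12}}=\varepsilon_{X_{21}}=-1$ has all row and column sums zero while no fibre of one quotient meets a fibre of the other in more than one class. Vanishing of fibre-sums in two transversal directions does not force vanishing of the individual entries; this is precisely the reason the Zassenhaus Conjecture does not reduce to quotients. The same objection kills the last sentence of your argument: nonnegativity of $\pa{u^d}{x}$ for $x\in N$ is \emph{not} ``inherited from the quotient conjugacies'' --- quotients only control sums over cosets and cannot rule out cancelling signs within a fibre --- and establishing this nonnegativity is the actual content of the theorem.

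The paper closes this gap by working with $u$ inside $\Z G$ rather than in quotients. The theorem of Hertweck and Margolis \cite{MargolisHertweck2017} produces $c\in N$ with $u_q$ conjugate to $c_q$ in $\Z_qG$ for every prime $q$, and \cite[Lemma~2.2]{Hertweck2008} then confines the support of $u$ to a single coset $bG'$ of $G'$. Next, the Cliff--Weiss inequalities \cite[Proposition~1.1]{MargolisdelRioCW} give $0\le\sum_{k\in K}|C_G(xk)|\pa{u}{xk}$ for a suitable $K\le N_{p'}$ chosen maximal with $K\cap G'_{p'}=1$ (this is where the hypotheses that $N_{p'}$ is abelian and $G'_{p'}$ is cyclic are actually spent); since any $k$ contributing a nonzero term must lie in $K\cap G'_{p'}=1$, the sum collapses to the single term $|C_G(x)|\pa{u}{x}$, yielding $\pa{u}{x}\ge 0$ directly. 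Finally, passing from $u$ to its powers $u^d$ requires the separate results of \cite{MargolisdelRioPAP}, which your sketch also leaves unaddressed. None of these three ingredients is present, or replaceable by the two-quotient bookkeeping, in your proposal.
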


\begin{theorem}\label{CyclicBypGroup}
	If $G$ is a cyclic-by-$p$-group (i.e. $G$ has a cyclic normal subgroup of prime power index) then every torsion element of $V(\Z G)$  is rationally conjugate  to an element of $G$.
\end{theorem}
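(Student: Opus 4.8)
The plan is to deduce Theorem~\ref{CyclicBypGroup} from Theorem~\ref{G'OneNonCyclic} by checking that a cyclic-by-$p$-group $G$ supplies a normal nilpotent subgroup $N$ satisfying the hypotheses of the first theorem, and then bridging the gap between ``rationally conjugate to an element of $N$'' and the full Zassenhaus Conjecture ``rationally conjugate to an element of $G$''. First I would fix a cyclic normal subgroup $C$ of $G$ with $[G:C]=p^n$ a prime power. The immediate difficulty is that $C$ itself need not contain $G'$, so it is not directly the $N$ we want; the natural candidate is $N=C\cdot P$ or, more carefully, a subgroup built so that $G/N$ is a $p$-group while $N$ is nilpotent and contains $G'$. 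I would argue that since $G/C$ is a $p$-group, $G$ is cyclic-by-($p$-group), hence $G'$ is contained in a subgroup whose prime-to-$p$ part comes entirely from the cyclic $C$; in particular $G'_{p'}\le C$ is cyclic, which is exactly the hypothesis $G'_{p'}$ cyclic.

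\smallskip

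Second I would verify the remaining hypotheses of Theorem~\ref{G'OneNonCyclic} for a suitable $N$. Taking $N$ to be the preimage in $G$ of a Sylow $p$-subgroup of $G/G'$ composed with $G'$ (equivalently, a normal subgroup with $N\supseteq G'$, $N/G'$ the $p$-part of the abelian group $G/G'$, and $G/N$ a $p'$-group), one hopes $N$ is nilpotent. However for the $p$-group quotient picture it is cleaner to instead take $N$ with $G/N$ a $p$-group and $N_{p'}$ abelian. Since $C$ is cyclic and normal with $p$-power index, the prime-to-$p$ part $G_{p'}$ is forced to be abelian: indeed $G_{p'}\le C_{p'}$ is cyclic, and more generally the Hall $p'$-structure of $G$ is controlled by the cyclic group $C$. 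Thus setting $N$ to be a normal nilpotent subgroup containing $G'$ with $N_{p'}$ abelian is achievable, and Theorem~\ref{G'OneNonCyclic} then gives that every torsion unit $u\in V(\Z G,N)$ is rationally conjugate to an element of $N$.

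\smallskip

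The main obstacle, and the third step, is to pass from torsion units in the \emph{relative} unit group $V(\Z G,N)$ to \emph{all} torsion units of $V(\Z G)$. Given an arbitrary torsion unit $u\in V(\Z G)$, its image $\bar u$ in $\Z(G/N)$ is a torsion unit of the integral group ring of the $p$-group $G/N$. Since the Zassenhaus Conjecture holds for nilpotent groups, and in particular for $p$-groups \cite{Weiss1991}, $\bar u$ is rationally conjugate in $\Q(G/N)$ to some $\bar g$ with $g\in G$; I would lift this conjugation to $\Q G$, replacing $u$ by a conjugate $u'$ whose image in $\Z(G/N)$ equals $\bar g$. Then $g^{-1}u'$ lies in $V(\Z G,N)$ and is again a torsion unit (after checking that $g$ normalizes the relevant structure, which holds because $N$ is normal). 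Applying Theorem~\ref{G'OneNonCyclic} to $g^{-1}u'$, we conjugate it into $N\subseteq G$, and composing the two conjugations places $u$ into $G$.

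\smallskip

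I expect the genuinely delicate part to be this lifting argument: one must show that a rational conjugacy in the quotient $\Q(G/N)$ can be realized by a rational conjugacy in $\Q G$ preserving torsion, and that the resulting element $g^{-1}u'$ is honestly a torsion unit in $V(\Z G,N)$ with the order-compatibility needed to invoke Theorem~\ref{G'OneNonCyclic}. This typically requires a careful analysis of the partial augmentations of $u$ and the interplay between the Wedderburn decomposition of $\Q G$ and that of $\Q(G/N)$, together with a standard reduction (via a result of Weiss or the theory of $p$-adic conjugacy) guaranteeing that the order of $\bar u$ divides that of $u$ and that the component at $N$ behaves well. Once these compatibilities are in place, the two theorems combine to give the full conclusion that every torsion unit of $V(\Z G)$ is rationally conjugate to an element of $G$.
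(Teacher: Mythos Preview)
Your third step---the ``lifting'' argument---does not work, and this is not a technicality that can be patched. There are two independent obstructions. First, even though the surjection $\Q G\twoheadrightarrow \Q(G/N)$ splits (so a conjugating unit $\alpha\in(\Q(G/N))^\times$ lifts to some $\beta\in(\Q G)^\times$), the element $u'=\beta u\beta^{-1}$ lies only in $\Q G$, not in $\Z G$; Theorem~\ref{G'OneNonCyclic} says nothing about torsion units of $\Q G$. Second, and more fundamentally, even if you could arrange $u'\in V(\Z G)$ with $\omega_N(u')=\bar g$, the element $g^{-1}u'$ is \emph{not torsion} in general: a product of two torsion units is torsion only when they generate a finite group, and there is no reason whatsoever for $g$ and $u'$ to commute. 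This ``reduce to the quotient, then multiply by $g^{-1}$'' strategy is precisely what one would like to do in every Zassenhaus-type problem, and its failure is exactly why the conjecture is hard. No amount of partial-augmentation bookkeeping or $p$-adic conjugacy repairs this.

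The paper proceeds entirely differently, by a minimal-counterexample argument that never attempts to lift a conjugacy. One assumes $G$ is a cyclic-by-$p$-group counterexample of minimal order (so (ZC) holds for all proper quotients) and $u$ a torsion unit of minimal order not rationally conjugate into $G$. With $A$ the cyclic normal Hall $p'$-subgroup and $N=C_G(A)$, Theorem~\ref{G'OneNonCyclic} forces $\omega_{C_G(A)}(u)\ne 1$; Lemma~\ref{NegativeAugmentationAndOrder} shows that every negative partial augmentation of $u$ is supported on $C_G(A)$ and that every prime dividing $|G|$ divides $|u|$. Since (ZC) is known for cyclic-by-abelian groups, the Sylow $p$-subgroup $P$ is non-abelian, so Lemma~\ref{ciclicopgrupo} provides a central element $z$ of order $p$. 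Minimality applied to $G/\langle z\rangle$ gives $g\in G$ with $\omega_{\langle z\rangle}(u)$ rationally conjugate to $\omega_{\langle z\rangle}(g)$; from $\omega_{C_G(A)}(u)\ne 1$ one gets $g\notin C_G(A)$, while the divisibility of $|g|$ by every prime of $|A|$ forces $\Soc(A)\subseteq\langle g\rangle$ and hence $g\in C_G(g)\subseteq C_G(\Soc(A))=C_G(A)$, a contradiction. The role of Theorem~\ref{G'OneNonCyclic} here is only to rule out $u\in V(\Z G,C_G(A))$, not to finish the proof by itself.
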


\begin{theorem}\label{CyclicByHamiltonian}
Let $G$ be a finite group having a normal cyclic subgroup $A$ such that $G/A$ is Hamiltonian and one of the following conditions holds:
\begin{enumerate}
 \item $|A|$ is not multiple of $8$;
 \item $[G:A]$ is not multiple of $16$;
 \item the image of the action of $G$ on the Sylow $2$-subgroup of $A$ has order different from $4$.
\end{enumerate}
Then every torsion element of $V(\Z G)$ is rationally conjugate to an element of $G$.

\end{theorem}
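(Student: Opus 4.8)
The plan is to exploit the Dedekind--Baer description of Hamiltonian groups to isolate a single prime, namely $2$, and then to combine Theorem~\ref{G'OneNonCyclic} at $p=2$ with a reduction along the abelian quotient $G/G'$. Writing $G/A\cong Q_8\times E\times O$ with $E$ an elementary abelian $2$-group and $O$ abelian of odd order, the derived subgroup of $G/A$ is the central involution of $Q_8$, so $G'A/A$ has order $2$ and hence $G'/(G'\cap A)\cong C_2$. Consequently every element of odd order of $G'$ lies in the cyclic normal subgroup $A$, so the Hall $2'$-subgroup $G'_{2'}=G'\cap A_{2'}$ is cyclic, and similarly $G''\subseteq A$ is cyclic; this is precisely what makes $p=2$ the natural prime for Theorem~\ref{G'OneNonCyclic}. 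I would therefore fix a suitable nilpotent subgroup $N$ with $G'\subseteq N$ and $N_{2'}$ abelian (its existence being part of what the hypotheses must secure, since a cyclic-by-Hamiltonian group need not have $G'$ itself nilpotent), so that Theorem~\ref{G'OneNonCyclic} applies and every torsion element of $V(\Z G,N)$ is rationally conjugate to an element of $N$.

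The second step is to reduce an arbitrary torsion unit $u\in V(\Z G)$ to one lying in $V(\Z G,N)$. Since $N\supseteq G'$, the quotient $G/N$ is abelian, and every torsion unit of $V(\Z(G/N))$ is a group element by Higman's theorem for abelian groups; thus the image of $u$ in $\Z(G/N)$ is some $\bar g$ with $g\in G$. After conjugating $u$ in $\Q G$ so that its image is exactly $\bar g$, the coefficients of $u$ concentrate, coset by coset, on $gN$, which forces $\pa{u}{h}=0$ unless $\bar h$ is conjugate to $\bar g$ in $G/N$. Writing $m=o(\bar g)$, the power $u^m$ then lies in $V(\Z G,N)$ and so, by the first step, is rationally conjugate to an element of $N$. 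I would finish by verifying the Marciniak--Ritter--Sehgal--Weiss criterion for $u$, controlling $\pa{u^d}{h}$ for each divisor $d$ of the order of $u$ from the concentration of partial augmentations supplied by the quotient together with the information on $u^m$. The delicate point, as always in this circle of ideas, is that $u$ times a group element need not be torsion, so this passage must be carried out through partial augmentations rather than by naive multiplication.

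What then remains is a genuinely $2$-local computation, concentrated on the interaction of the quaternion quotient with the Sylow $2$-subgroup $A_2$ of $A$ and with the action of $G$ on $A_2$; I expect this to be the main obstacle, and it is exactly here that the three numerical hypotheses intervene, each restricting the $2$-local structure to a regime that can be settled directly, either by the character (HeLP) method or by reduction to the cyclic-by-$2$-group case of Theorem~\ref{CyclicBypGroup}. Indeed condition~(1) forces $|A_2|\le 4$; condition~(2) forces $E=1$, so that the $2$-part of $G/A$ is exactly $Q_8$; and condition~(3) excludes the single action of order $4$ on the cyclic $2$-group $A_2$, which is the configuration lying closest to the Eisele--Margolis counterexample and which these methods cannot reach. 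Treating these three cases, together with the general reduction above, completes the proof.
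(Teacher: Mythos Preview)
Your initial reduction is sound—indeed Theorem~\ref{G'OneNonCyclic} applies with $N=G'$ and $p=2$ unconditionally. Your parenthetical worry is misplaced: $G'\subseteq C_G(A)$, which is nilpotent by Lemma~\ref{GmoduloCabeliano}, so $G'$ is always nilpotent, and $G'_{2'}\subseteq A_{2'}$ is always cyclic. The hypotheses (1)--(3) play no role here. The paper does essentially the same thing, with $N=D=Z(C_G(A))$ (Proposition~\ref{ZCtrueforD}).

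The gap is everything after that. Your step~2 does not ``reduce'' $u$ to $u^m\in V(\Z G,N)$: knowing that $u^m$ is rationally conjugate to an element of $N$ gives no direct control over the signs of $\pa{u}{h}$, and you offer no mechanism for bridging this—you yourself flag it as ``delicate'' and then move on. The ``genuinely $2$-local computation'' you defer to step~3 is therefore not a residual calculation but the entire content of the theorem, and none of it is carried out. Your specific suggestions for step~3 also do not work as stated: condition~(2) does force $E=1$, but $G/A\cong Q_8\times O$ is still not a $2$-group when $O\ne 1$, so there is no reduction to Theorem~\ref{CyclicBypGroup}; and ``HeLP'' is not a method one can invoke without naming the characters and writing down the constraints.

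The paper's route is quite different and this is where the work lies. Rather than trying to pass from $u^m$ back to $u$, it takes a minimal counterexample $(G,u)$, shows (Proposition~\ref{ZCtrueforD} and Lemma~\ref{aumentofueraD}) that any negative partial augmentation $\pa{u}{\delta}<0$ must occur at some $\delta\in D$ with $\omega_D(u)\ne 1$, and then feeds this into the explicit multiplicity formula of Proposition~\ref{PartialAugmentationTraces} applied with $N=D$. A chain of inequalities comparing the two sides of that formula (the computations \eqref{CuentaLarga}--\eqref{Emparedado}) forces rigid constraints on $f=|\omega_D(u)|$, on the commutators $(\delta,G)$, and ultimately on the Sylow $2$-subgroup of $G$ (Lemma~\ref{Casos} and the proof of Proposition~\ref{MinimalCounterexampleHamiltonian}). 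The three hypotheses are not used to split into tractable cases; they are shown, \emph{a posteriori}, to be violated by every minimal counterexample. That argument, occupying Sections~\ref{SectionFormulaPA}--\ref{SectionCyclicByHamiltonian}, is the substance you are missing.
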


By Theorem~\ref{G'OneNonCyclic}, in all the negative solutions to Sehgal's Problem, with $N$ and $G/N$ abelian, $G'$ has at least two non-cyclic Sylow subgroups. 
This seems to be quite sharp because, by a result of Cliff and Weiss \cite{CliffWeiss}, if $N$ has at most one non-cyclic Sylow subgroup then Sehgal's Problem has a positive solution for every $G$, while for some of the counterexample in \cite{EiseleMargolis2017} one has $G'=N\cong C_p^2\times C_q^2$ with $p$ and $q$ different primes.

Theorem~\ref{G'OneNonCyclic} will be used in the proof of Theorem~\ref{CyclicBypGroup} and in the proof of the following proposition which gives information on the $2$-Sylow subgroup of a hypothetical minimal cyclic-by-Hamiltonian counterexample to the Zassenhaus Conjecture. The proposition is the bulk of the proof of Theorem~\ref{CyclicByHamiltonian}.

\begin{proposition}\label{MinimalCounterexampleHamiltonian}
Let $G$ be a finite group with a normal cyclic subgroup $A$ of order $n$ such that $G/A$ is Hamiltonian. Suppose that the Zassenhaus Conjecture holds for every proper quotient of $G$ but fails for $G$. Then $8|n$ and there are $x\in C_G(A)$ and $y,w\in G$ such that $x^2\not\in A$ and if $v=(y,w)$ then $v$ generates the Sylow 2-subgroup of $A$ and the following conditions holds:
$$|x|=4,\; w^2=x^2y^2=1, \;v^w = v^{-1}, \;
v^y = v^{\frac{n}{2}-1}, \;x^w = v^{\frac{n}{4}}x, \;
x^y \in x^{-1}\GEN{v^{\frac{n}{2}}}.$$ 
\end{proposition}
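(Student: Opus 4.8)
The plan is to work with the minimal counterexample already in hand: fix a torsion element $u$ of $V(\Z G)$ that is not rationally conjugate to any element of $G$, while every proper quotient of $G$ satisfies the Zassenhaus Conjecture. By the Dedekind--Baer description of Hamiltonian groups, $G/A\cong Q_8\times E\times F$ with $E$ an elementary abelian $2$-group and $F$ abelian of odd order, so $(G/A)'\cong C_2$. First I would convert the failure into partial-augmentation data: by the criterion of Marciniak, Ritter, Sehgal and Weiss, some $\varepsilon_g(u^d)$ is negative, while minimality forces, for every nontrivial normal subgroup $N$, the image of $u$ in $\Z(G/N)$ to be rationally conjugate to a group element. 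The purpose of this first step is that these conditions, imposed simultaneously over all quotients, localize the ``defect'' of $u$ over $A$ and at the prime $2$.

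Next I would dispose of the odd part. Since $(G/A)'\cong C_2$ is a $2$-group, the odd part $G'_{2'}$ of the derived subgroup maps trivially to $(G/A)'$, hence lies in $A$ and is therefore cyclic; so Theorem~\ref{G'OneNonCyclic} is applicable with $p=2$ to a suitable nilpotent $N$ with $G'\le N\le G$ and $N_{2'}$ abelian, and it shows that torsion units of $V(\Z G,N)$ are rationally conjugate to elements of $N$. Combining this with the validity of the conjecture for cyclic-by-$p$-groups (Theorem~\ref{CyclicBypGroup}) and for cyclic-by-abelian groups \cite{CaicedoMargolisdelRio2013}, I would show that if the Sylow $2$-subgroup $A_2$ of $A$ were too small, or if $G$ acted on $A_2$ through a group other than a specific Klein four-group, then $u$ would either descend to a proper quotient or fall into one of the already solved classes, contradicting minimality. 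This is what forces $8\mid n$ and pins down the image of the action of $G$ on $A_2=\GEN{v}$.

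It remains to read off the generators and relations from the $2$-local structure. The $Q_8$-factor of $G/A$ supplies the lifts: I would take $x\in C_G(A)$ lifting an element of order $4$ of $Q_8$, so that $\bar x$ has order $4$ and $x^2\notin A$, and after multiplying by an element of $A$ arrange $|x|=4$; and $y,w$ lifting two commuting elements of $G/A$ whose genuine commutator $v=(y,w)$ is a nontrivial element of $A_2$. Minimality forces $v$ to generate $A_2$ (otherwise one passes to a proper quotient such as $G/\GEN{v^2}$), and the quaternion relations read modulo $A$, together with the computation of the action of $G$ on $A_2$ from the previous step, yield $w^2=1$, $x^2y^2=1$, $v^w=v^{-1}$, $v^y=v^{\frac{n}{2}-1}$, $x^w=v^{\frac{n}{4}}x$ and $x^y\in x^{-1}\GEN{v^{\frac{n}{2}}}$, the residual indeterminacy being precisely an element of $\GEN{v^{\frac{n}{2}}}$.

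The main obstacle is this last step: normalizing the lifts so that the exponents come out exactly as stated. Showing that $A_2$ is generated by the single commutator $v=(y,w)$, that $y$ and $w$ induce on $A_2$ the semidihedral automorphism $v\mapsto v^{\frac{n}{2}-1}$ and the inversion $v\mapsto v^{-1}$ respectively, and pinning down the twist $x^w=v^{\frac{n}{4}}x$ requires careful bookkeeping of the $Q_8$-relations lifted from $G/A$ to $G$, controlling at each stage the ambiguity by $A$ and invoking minimality to exclude every competing configuration. The delicate point is to guarantee that the negative partial augmentation of $u$ cannot survive unless the $2$-local data is arranged in exactly this way, so that any deviation either contradicts the minimality of $G$ or places $G$ within the scope of Theorem~\ref{G'OneNonCyclic} or Theorem~\ref{CyclicBypGroup}.
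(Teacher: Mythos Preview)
Your proposal has a genuine gap at the crucial linkage point: you never explain how the failure of (ZC) for $G$ is converted into the existence of $x\in C_G(A)$ with $(x,y)=\nu$ for some $y\in G$. In the paper this is Lemma~\ref{Casos}\eqref{NotCentralOrder2}, and its proof rests on the partial augmentation formula of Proposition~\ref{PartialAugmentationTraces} together with the chain of inequalities \eqref{CuentaLarga}--\eqref{Emparedado}, which squeeze $[C_G(\delta^f):C_G(\delta)]$ between two explicit quantities and force $(\delta,G)\not\subseteq A$. None of this appears in your sketch. Instead you propose to lift an order-$4$ element of the $Q_8$ factor to $x\in C_G(A)$, but there is no reason such a lift exists in $C_G(A)$; the $Q_8$ factor of $G/A$ may well act nontrivially on $A$. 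The point of the paper's argument is precisely that the \emph{failure} of (ZC), encoded in a negative partial augmentation, is what forces some element of $D=Z(C_G(A))$ to have a commutator in $\nu^G$.

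Your use of minimality is also not sound. Saying ``minimality forces $v$ to generate $A_2$, otherwise pass to $G/\GEN{v^2}$'' is backwards: (ZC) holding in a proper quotient does not lift any conclusion to $G$, it only says the image of $u$ is rationally conjugate to a group element there. In the paper, $\GEN{(y,w)}=A_2$ is obtained from the relation $v^{n_2/2}\nu=\nu^w=(v^rx,v^sy)=(v^r,y)\nu$, which forces $s$ to be odd once one has shown $l_y\equiv \tfrac{n_2}{2}-1\pmod{n_2}$; and that in turn comes from an explicit elimination of the alternative $l_y=1+\tfrac{n_2}{2}$ via a commutator computation. Similarly, $8\mid n_2$ and the precise automorphisms $v^y=v^{n_2/2-1}$, $v^w=v^{-1}$, $x^w=v^{n_2/4}x$ are not ruled in by appealing to Theorems~\ref{G'OneNonCyclic} or \ref{CyclicBypGroup}; they are extracted from the identities among $l_y$, $l_w$, $r$, $s$, $j$ derived in the paper. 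Your proposal names the right ingredients at the level of group theory but omits the analytic input (the eigenvalue-multiplicity formula and the resulting sandwich inequality) without which the argument cannot start.
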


The paper is structured as follows. 
In Section~\ref{SectionPreliminaries} we establish the basic notation and collect some known facts about the Zassenhaus Conjecture which will be used in the subsequent sections. For example, we recall the role of partial augmentations which gives the necessary background to prove Theorem~\ref{G'OneNonCyclic} by combining results by Hertweck, Margolis and del Río. 
Section~\ref{SectionCycByp} starts proving general results for cyclic-by-nilpotent groups and finishes with the proof of Theorem~\ref{CyclicBypGroup}. 
Section~\ref{SectionFormulaPA} revisits an equality from \cite{CaicedoMargolisdelRio2013} concerning partial augmentations, which plays an important role in the proofs of Proposition~\ref{MinimalCounterexampleHamiltonian} and Theorem~\ref{CyclicByHamiltonian}, which are given at the end in Section~\ref{SectionCyclicByHamiltonian}.

\section{Notation and Preliminaries}\label{SectionPreliminaries}

The cardinality of a set $X$ is denoted by $|X|$ and $\varphi$ denotes the Euler's totient function. For every integer $n$, we let $\zeta_n$ denote a fixed complex
primitive root of unity of order $n$ and $C_n$ denotes an arbitrary cyclic group of order $n$. If $F/K$ is an extension of number fields then $\tr_{F/K}$ denotes the trace map of $F$ over $K$.

All throughout $G$ is a finite group, $\Cl(G)$ denotes the set of conjugacy classes of $G$, $Z(G)$ denotes the center of $G$, $G'$ stands for the commutator subgroup of $G$, $\Exp(G)$ for the exponent of $G$ and $\Soc(G)$ for the socle of $G$, i.e. the subgroup generated by the minimal (non-trivial) normal subgroups of $G$.
If $g,h \in G$, then $|g|$ denotes the order of $g$, $g^h=h^{-1}gh$, $(g,h)=g^{-1}h^{-1}gh$ and $g^G$ denotes the conjugacy class of $g$ in $G$.
If $X,Y \subseteq G$, then $\GEN{X}$ denotes the subgroup generated by $X$,
$C_X(Y)$ denotes the centralizer of $Y$ in $X$, $N_X(Y)$ denotes the normalizer of $Y$ in $X$ and we denote $(X,Y)=\{(x,y) : x\in X, y\in Y\}$.

If $g$ is an element of order $n$ in a group then the proper powers of $g$ are the elements of the form $g^d$ with $d\mid n$ and $d\ne 1$.
If $\pi$ is a set of prime integers then $g_{\pi}$ and $g_{\pi'}$ denote the $\pi$ and $\pi'$ parts of $g$, respectively.
The notation $G_{\pi}$ (respectively, $G_{\pi'}$) refers to a Hall $\pi$-subgroup (respectively, Hall $\pi'$-subgroup) of $G$.
Furthermore, if $p$ is a prime integer then $g_p$, $g_{p'}$, $G_p$ and $G_{p'}$ are abbreviations of $g_{\{p\}}$, $g_{\{p\}'}$, $G_{\{p\}}$ and $G_{\{p\}'}$, respectively.
All the finite groups $G$ appearing in this paper are solvable, so the existence of $G_{\pi}$ and $G_{\pi'}$ is warranted and these subgroups are unique up to conjugation in $G$ (see e.g. \cite[9.1.7]{Robinson1982}).
Moreover, if $G$ is nilpotent then $G_{\pi}$ and $G_{\pi'}$ are unique.

If $R$ is a ring and $N$ is a normal subgroup of $G$ then the natural map $G\rightarrow G/N$ extends to a ring homomorphism $\omega_N:RG \rightarrow R(G/N)$. Furthermore,  we denote by $V(RG,N)$ the group of units of $RG$ mapped to the identity via $\omega_N$.

If $a=\sum_{x\in G} a_xx\in RG$, with $a_x\in R$ for every $x\in G$, and $X$ is a subset of $G$ then we denote
	$$\varepsilon_X(a)=\sum_{x\in X} a_x.$$
If $g\in G$ then the \emph{partial augmentation} of $a$ at $g$ is $\pa{a}{g^G}$.
For brevity, when $G$ is our target group, the partial augmentation of $a$ at $g$ is simply denoted $\pa{a}{g}$.
The following formula, for $N$ a normal subgroup of $G$, is easy to check

	\begin{equation}\label{PAQuotient}
	\pa{\omega_N(a)}{\omega_N(g)^{G/N}} = \pa{a}{g^GN} = \sum_{X\in \Cl(G), X\subseteq g^GN} \pa{a}{X},
	\end{equation}
where $g^GN=\{xn : x\in g^G, n\in N\}$.

Our main tool is the following well known result.

\begin{proposition}\label{ZC1PAugmentation}\cite{MarciniakRitterSehgalWeiss1987}
Let $G$ be finite group and let  $u$ be a torsion element of $V(\Z G)$.
Then $u$ is rationally conjugate to an element of $G$ if and only if $\varepsilon_{g^G}(u^d)\ge 0$ for every $d\mid n$ and every $g\in G$.
\end{proposition}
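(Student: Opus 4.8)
The plan is to prove the two implications separately: the forward (necessity) direction is elementary, while the converse rests on a local-global conjugacy analysis whose engine is the $p$-adic theory of torsion units.

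For necessity, suppose $u = a^{-1}g_0 a$ with $a \in (\Q G)^\times$ and $g_0 \in G$. The point is that each partial augmentation $\varepsilon_{g^G}$ is conjugation-invariant on $\Q G$. Indeed, writing characters $\chi \in \Irr(G)$, one has $\varepsilon_{g^G}(x) = \tfrac{|g^G|}{|G|}\sum_{\chi}\overline{\chi(g)}\,\chi(x)$, so $\varepsilon_{g^G}$ is a $\C$-linear combination of the trace forms $\chi$ and therefore satisfies $\varepsilon_{g^G}(xy)=\varepsilon_{g^G}(yx)$. Hence, for every $d\mid n$,
\[ \varepsilon_{g^G}(u^d) = \varepsilon_{g^G}(a^{-1}g_0^d a) = \varepsilon_{g^G}(g_0^d), \]
and the right-hand side equals $1$ if $g_0^d\in g^G$ and $0$ otherwise; in either case it is non-negative. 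This settles the ``only if'' direction.

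For sufficiency, assume $\varepsilon_{g^G}(u^d)\ge 0$ for all $d\mid n$ and $g\in G$. First I would pin down the partial-augmentation pattern of each power. Since $u^d\in V(\Z G)$, its partial augmentations are integers summing to the augmentation $\varepsilon(u^d)=1$; being non-negative, exactly one class $C_d=g_d^G$ satisfies $\varepsilon_{C_d}(u^d)=1$ while all others vanish. By the Berman--Higman theorem the coefficient of $1$ in a non-trivial torsion unit is $0$, so $C_d\ne\{1\}$ precisely when $u^d\ne 1$, and $|g_d|=|u^d|=n/\gcd(n,d)$. Thus each $u^d$ has exactly the partial augmentations of a group element $g_d$. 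Taking characters, the case $d=1$ already gives $\chi(u)=\sum_g\varepsilon_{g^G}(u)\chi(g)=\chi(g_1)$ for all $\chi\in\Irr(G)$; since torsion elements of the semisimple algebra $\Q G$ are semisimple, equal character values mean equal eigenvalue multiplicities in every simple component, so $u$ and $g_1$ are conjugate in $\C G$.

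The remaining, and substantive, task is to upgrade this $\C G$-conjugacy to genuine $\Q G$-conjugacy, and this is exactly where the full hypothesis (non-negativity over all divisors $d$, not just $d=1$) is consumed: $\C G$-conjugacy alone is strictly weaker, as the Schur indices of the division-algebra components obstruct the descent. I would carry out the upgrade $p$-locally. For each prime $p\mid n$ the conjugacy theory of torsion units over the $p$-adic group ring $\Z_p G$ --- built on Weiss's permutation-lattice theorem and its refinements by Roggenkamp--Scott --- shows that the group-element partial-augmentation pattern of all powers of $u$ forces $u$ to be conjugate to a group element inside $\Z_p G$; patching these local conjugacies compatibly with the already-established global equality of character values produces a single $g\in G$ and a unit of $\Q G$ conjugating $u$ to $g$. \textbf{The main obstacle} is precisely this local-to-global passage and the control of the division-algebra components; by contrast the elementary reductions above (integrality of partial augmentations, augmentation $1$, Berman--Higman, and the eigenvalue-multiplicity computation) serve only to isolate the problem as that core $p$-adic input.
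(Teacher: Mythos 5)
Your necessity argument is fine, but the sufficiency half has a genuine gap at its very center, and it sits exactly where you wrote ``already gives''. From $\varepsilon_{g^G}(u)\ge 0$ for all $g$ you correctly extract a single class $g_1^G$ with $\chi(u)=\chi(g_1)$ for all $\chi\in\Irr(G)$; but equality of traces at $d=1$ does \emph{not} give equal eigenvalue multiplicities: the trace of a semisimple matrix does not determine its eigenvalue multiset (in $M_2(\C)$, $\diag(1,-1)$ and $\diag(i,-i)$ are both traceless torsion elements and are not conjugate). To get $\C G$-conjugacy you need $\chi(u^d)=\chi(g_1^d)$ for \emph{all} $d\mid n$, whereas what the hypothesis gives you is $\chi(u^d)=\chi(g_d)$ for classes $g_d^G$ that a priori have nothing to do with powers of $g_1$. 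The actual content of the cited result (the paper gives no proof, citing \cite{MarciniakRitterSehgalWeiss1987}) is precisely the coherence statement $g_d\in (g_1^d)^G$. In the standard argument this is obtained from the Frobenius-type congruence $\varepsilon_{g^G}(u^p)\equiv \varepsilon_{\{h\in G\,:\,h^p\in g^G\}}(u) \pmod{p}$, which follows from $\left(\sum_g a_g g\right)^p \equiv \sum_g a_g^p\, g^p$ modulo $p\Z G$ and additive commutators, combined with induction over the divisors of $n$; with the unique-nonvanishing-partial-augmentation pattern this forces $\varepsilon_{g_1^p}(u^p)\equiv 1 \pmod p$, hence $g_p\in(g_1^p)^G$, and iterating gives both the coherence and $|g_1|=n$ --- note that your assertion $|g_d|=|u^d|$ was also left unjustified (Berman--Higman only rules out the trivial class). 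This coherence step is where the hypothesis over all divisors $d$ is genuinely consumed, not where you located it.

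Conversely, the step you single out as ``the main obstacle'' is not one. Conjugacy of two elements of a finite-dimensional algebra over an infinite field is insensitive to field extension: set $T=\{x\in\Q G : xu=g_1x\}$, a $\Q$-subspace whose complexification is the corresponding solution space in $\C G$; if it contains an invertible element over $\C$, then the regular-representation determinant, a polynomial function on $T$ with rational coefficients, is not identically zero, and since $\Q$ is infinite it is nonzero at some rational point, which conjugates $u$ to $g_1$ inside $\Q G$. Schur indices obstruct the realizability of representations over $\Q$, not the conjugacy of semisimple elements within $\Q G$, so there is no descent problem. Accordingly, Weiss's permutation-lattice theorem and the Roggenkamp--Scott $p$-adic machinery are neither needed nor historically plausible for this 1987 result (and your ``patching of local conjugacies'' is in any case only gestured at, not carried out). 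In short, the proposal inverts the difficulty: the step you compressed into one false sentence is the theorem's core, while the step you flagged as deep is elementary linear algebra.
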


We can now give the

\begin{proofof} {\it Theorem~\ref{G'OneNonCyclic}}.
	Let $u$ be a torsion element of $V(\Z G,N)$.
	By the Berman-Higman Theorem there is $b\in N$ with $\omega_{G'}(u)=\omega_{G'}(b)$.
	By the main result of \cite{MargolisHertweck2017} there is $c\in N$ such that for every prime $q$ we have that $u_q$ is conjugate in $\Z_qG$ to $c_q$, where $\Z_q$ denotes the ring of $q$-adic integers. 
	By \cite[Lemma~2.2]{Hertweck2008}, if $\varepsilon_x(u)\ne 0$ then $x_q$ is conjugate to $c_q$ in $G$ and, in particular, $x\in N$. 
	Thus, if $\varepsilon_x(u)\ne 0$ then $\omega_{G'}(x_q)=\omega_{G'}(c_q) = \omega_{G'}(u_q)=\omega_{G'}(b_q)$ for every prime integer $q$, and hence $xG'=cG'=bG'$.
	This proves that if $\varepsilon_x(u)\ne 0$ then $x\in bG'$.

	Let $K$ be a subgroup of $N_{p'}$ which is maximal with respect to the following condition: $K\cap G'_{p'}=1$. 
	Then $N/K$ is cyclic. 
	By \cite[Proposition~1.1]{MargolisdelRioCW}, $0\le \sum_{k\in K} |C_G(xk)|\varepsilon_{xk}(u)$ for every $x\in N$, 
	however, by the previous paragraph, if $k\in K$ and $\pa{u}{xk}\neq 0$ then $xk\in bG'=xG'$. Hence $k\in K\cap G'\cap N_{p'}=K\cap G'_{p'}=1$.
	Therefore $0\le \sum_{k\in K}|C_G(xk)|\pa{u}{xk}=|C_G(x)|\pa{u}{x}$. 
	This proves that $\pa{u}{x}\ge 0$ for every $x\in G$. 
	Moreover, by Proposition 3.1 and Theorem~3.3 of \cite{MargolisdelRioPAP}, we have $\varepsilon_g(u^d)\geq 0$ for every $g\in G$ and every $d\mid n$. Then $u$ is rationally conjugate to an element $g$ of $G$, by Proposition~\ref{ZC1PAugmentation}. Since $u\in V(\Z G,N)$, necessarily $g\in N$. 
\end{proofof}

Another important tool is the following result \cite[Remark 2.4]{Hertweck2008}. 

\begin{lemma}\label{Hertweckaumentocero}
 Let $u$ be a torsion element of $V(\Z G)$ and let $g\in G$. 
 If $\pa{u}{g}\ne 0$ then the order of $g$ divides the order of $u$.
\end{lemma}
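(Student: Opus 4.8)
The statement to prove is Lemma~\ref{Hertweckaumentocero}: if $u$ is a torsion element of $V(\Z G)$ and $g \in G$ satisfies $\pa{u}{g} \neq 0$, then $|g|$ divides $|u|$. My plan is to exploit the behavior of partial augmentations under the quotient maps $\omega_N$ together with the fact, recorded in formula~\eqref{PAQuotient}, that partial augmentations push forward nicely along these homomorphisms. The key idea is to reduce the question about $g$ to a question about its image in a suitable cyclic quotient where the divisibility becomes transparent.

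First I would set $n = |u|$ and let $m = |g|$. The goal is to show $m \mid n$, equivalently that for every prime power $q^a$ dividing $m$ we have $q^a \mid n$. I would fix a prime $q$ and consider the $q$-part: it suffices to show that the order of $g_q$ (the $q$-part of $g$) divides $n$. To isolate this, I would pass to a cyclic quotient. Concretely, consider the cyclic subgroup $\GEN{g_q}$ and build a normal subgroup $N$ of $G$ whose quotient $G/N$ contains the image of $g_q$ with its full order; more cleanly, one maps into the cyclic group $\GEN{g}/\GEN{g}\cap(\text{proper part})$ or uses a linear character. The cleanest route is via a linear character $\chi$ of $G$: since $\omega_N$ for $N = \Ker\chi$ sends $u$ to a torsion unit of augmentation one in the commutative ring $\Z[\zeta_{|g|}]$-valued image, and such a unit is $\pm$ a root of unity, I can read off order constraints.

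The heart of the argument is as follows. By formula~\eqref{PAQuotient}, for any normal subgroup $N$ of $G$,
$$\pa{\omega_N(u)}{\omega_N(g)^{G/N}} = \sum_{X \subseteq g^G N} \pa{u}{X} \geq \pa{u}{g} \quad\text{(up to sign conventions on the summands)}.$$
Since $\pa{u}{g} \neq 0$, the image $\omega_N(g)$ survives with nonzero partial augmentation in $V(\Z(G/N))$, provided the classes do not cancel. The decisive step is to choose $N$ so that $\omega_N(u)$ lands in a group ring over a \emph{cyclic} group, or maps under a character to a root of unity: a torsion unit of $\Z(G/N)$ that is the image of $u$ has order dividing $|u| = n$, because quotient maps do not increase the order of a torsion element. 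Then $\omega_N(g)$, having nonzero partial augmentation in this torsion unit, must itself be a group element whose order divides the order of $\omega_N(u)$, hence divides $n$. Pulling this back over a family of such quotients (one for each prime $q \mid m$, arranged so that $\omega_N(g)$ retains the $q$-part of $g$) forces $q^{a} \mid n$ for every prime power $q^a \mid m$, giving $m \mid n$.

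\textbf{Main obstacle.} The delicate point is the possibility of cancellation in the sum in~\eqref{PAQuotient}: a priori the nonzero partial augmentation $\pa{u}{g}$ could be cancelled by contributions from other conjugacy classes mapping into the same class $\omega_N(g)^{G/N}$, so that $\omega_N(g)$ does not in fact appear in the support of the torsion unit $\omega_N(u)$. Handling this requires choosing the normal subgroup $N$ carefully---small enough that distinct relevant classes stay distinct modulo $N$, yet large enough to make the quotient cyclic or to trivialize enough of $G$ that the order argument applies. I expect the technical crux to be arranging this non-cancellation, most plausibly by working one prime $q$ at a time and selecting $N$ to be a complement-type normal subgroup capturing exactly the $q$-local information of $g$, so that $\omega_N(g)$ is forced into the support of $\omega_N(u)$ with nonzero coefficient.
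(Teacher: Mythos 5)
The paper does not actually prove Lemma~\ref{Hertweckaumentocero}: it is quoted verbatim from Hertweck \cite[Remark 2.4]{Hertweck2008}, so there is no internal proof to compare with, and your attempt must be judged on its own merits. It has a fatal gap, in fact two intertwined ones. First, it is circular: your decisive step --- ``$\omega_N(g)$, having nonzero partial augmentation in this torsion unit, must itself be a group element whose order divides the order of $\omega_N(u)$'' --- is exactly the statement of the lemma, now applied in $G/N$. The only quotients in which this is available for free are abelian ones, where by the Berman--Higman theorem a torsion element of $V(\Z(G/N))$ is literally a group element. That brings in the second, structural obstruction: to conclude $q^a\mid n$ you would need a normal subgroup $N$ with $G/N$ abelian (or cyclic) in which the image of $g$ still has order divisible by $q^a$. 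Such an $N$ need not exist: every abelian quotient factors through $G/G'$, so whenever $g\in G'$ --- which is precisely the hard case, and the relevant one throughout this paper, where the elements with possibly negative partial augmentations lie in $C_G(A)\supseteq G'$ --- every linear character and every cyclic quotient is blind to the order of $g$; for $G$ perfect there are no nontrivial such quotients at all. The cancellation issue you flag is real but secondary; note also that your displayed ``inequality'' $\pa{\omega_N(u)}{\omega_N(g)^{G/N}}\geq \pa{u}{g}$ is false as written, since \eqref{PAQuotient} is an equality of sums of integers that may be negative, and no sign convention rescues an estimate from it.

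The actual proof (Hertweck's) is local rather than global: one shows, one prime $p$ at a time, that $\pa{u}{g}\neq 0$ forces the order of $g_p$ to divide the order of $u_p$, working in the $p$-adic group ring $\Z_pG$ with $p$-adic conjugacy and Brauer-character-type arguments --- the same circle of ideas the paper invokes in its proof of Theorem~\ref{G'OneNonCyclic} via \cite{MargolisHertweck2017} and \cite[Lemma~2.2]{Hertweck2008}. No quotient of $G$ is taken; the divisibility is detected by representation-theoretic data over $\Z_p$, which, unlike abelianization, does see elements of $G'$. The missing idea in your proposal is exactly this passage to $p$-adic or modular methods; no choice of normal subgroup $N$ can repair the quotient strategy.
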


From now on we use (ZC) as an abbreviation of ``the Zassenhaus Conjecture".

\begin{remark}\label{Formulaaumentoscociente}
In several arguments we will assume that (ZC) holds for proper quotients of $G$ or for proper powers of a particular torsion element $u$ of $V(\Z G)$.

\begin{enumerate}
\item\label{PAQuotientHyp} In the first case, by \eqref{PAQuotient}, we have
	$$\pa{u}{g^GN}\ge 0 \hspace{0.5cm} \text{for every } g\in G \text{ and every } 1\ne N \unlhd G.$$

\item In the second case, by Proposition~\ref{ZC1PAugmentation}, to prove that $u$ is rationally conjugate to an element of $G$ it suffices to show that $\pa{u}{g}\ge 0$ for every $g\in G$.
\end{enumerate}

\end{remark}

\section{The Zassenhaus Conjecture holds for cyclic-by-p-groups}\label{SectionCycByp}

In this section we prove Theorem~\ref{CyclicBypGroup}, whose statement is precisely the title of the section. We start by considering the following broader setting
\begin{quote}
$G$ is a finite group and $A$ is normal cyclic subgroup of $G$ such that $G/A$ is nilpotent
\end{quote}
rather than that of Theorem~\ref{CyclicBypGroup} since some results presented in this section will be used also in subsequent sections.

As $A$ is normal in $G$, so is $C_G(A)$. Moreover, as $G/A$ is nilpotent, so is $C_G(A)/A$. Furthermore, since $A$ is central in $C_G(A)$, the latter is nilpotent. Finally, as $\Aut(A)$ is abelian we conclude that $G/C_G(A)$ is abelian and hence $G'\subseteq C_G(A)$, i.e. $(G',A)=1$. We record these for future use:

\begin{lemma}\label{GmoduloCabeliano}
$C_G(A)$ is a normal nilpotent subgroup of $G$ containing $G'$. Thus, $(G', A)=1$.
\end{lemma}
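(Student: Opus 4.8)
The plan is to establish the three assertions of the lemma---that $C_G(A)$ is normal, that it is nilpotent, and that it contains $G'$---one at a time, and then to read off $(G',A)=1$ as an immediate consequence of the containment $G'\subseteq C_G(A)$. The only data I intend to use are that $A$ is normal and cyclic and that $G/A$ is nilpotent.

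First I would prove $C_G(A)\unlhd G$. This is purely formal: since $A$ is normal, conjugation by any $g\in G$ stabilizes $A$, so if $c$ centralizes $A$ then $gcg^{-1}$ centralizes $gAg^{-1}=A$. Hence $gC_G(A)g^{-1}\subseteq C_G(A)$ for all $g$, i.e. $C_G(A)$ is normal in $G$. No hypothesis on $A$ beyond its normality is needed here.

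Next I would show $C_G(A)$ is nilpotent. The key observation is that $A$, being abelian and centralized by $C_G(A)$, sits inside the centre: $A\subseteq Z(C_G(A))$. The quotient $C_G(A)/A$ is a subgroup of the nilpotent group $G/A$, hence is itself nilpotent. I would then invoke the standard fact that a central extension of a nilpotent group is nilpotent: if $Z\subseteq Z(H)$ and $H/Z$ is nilpotent then $H$ is nilpotent (the lower central series of $H$ terminates one step later than that of $H/Z$). Applying this with $H=C_G(A)$ and $Z=A$ gives the nilpotency of $C_G(A)$. I expect this central-extension step to be the only part of the argument requiring more than a one-line observation, though it is a well-known result rather than a genuine obstacle.

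Finally, for $G'\subseteq C_G(A)$, I would use the conjugation action of $G$ on $A$. This furnishes a homomorphism $G\to\Aut(A)$ whose kernel is exactly $C_G(A)$. Since $A$ is cyclic, $\Aut(A)$ is abelian, so $G/C_G(A)$ embeds into an abelian group and is therefore abelian; consequently $G'\subseteq C_G(A)$. The claim $(G',A)=1$ then follows at once, because every element of $G'$ lies in $C_G(A)$ and so commutes with every element of $A$.
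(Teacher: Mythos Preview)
Your argument is correct and follows exactly the same route as the paper's own justification (given in the paragraph immediately preceding the lemma): normality of $C_G(A)$ from normality of $A$, nilpotency via $A\subseteq Z(C_G(A))$ and the nilpotency of $C_G(A)/A\le G/A$, and $G'\subseteq C_G(A)$ from the abelianness of $\Aut(A)$. There is nothing to add.
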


The following lemma shows  some restrictions on a hypothetical minimal cyclic-by-nilpotent counterexample to (ZC) and on its possible negative partial augmentations.

\begin{lemma}\label{NegativeAugmentationAndOrder}
Suppose that (ZC) holds for every proper quotient of $G$ and let $u$ be a torsion element of $V(\Z G)$. Then

\begin{enumerate}
	\item If one of the following conditions holds for $x\in G$ then $\pa{u}{x}\ge 0$.
\begin{enumerate}
	\item\label{aumentofueraC} $x\not\in C_G(A)$. 
	\item\label{NotNormalSubgroup}  $(x,G)$ contains a non-trivial normal subgroup of $G$.
\end{enumerate}	
	\item\label{PrimesDividingA}  If $u$ is not rationally conjugate to any element of $G$ then every prime divisor of the order of  $C_G(A)$ divides the order of $u$.
\end{enumerate}

\end{lemma}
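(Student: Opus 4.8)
The plan is to prove each assertion using the hypothesis that (ZC) holds for proper quotients, which via Remark~\ref{Formulaaumentoscociente}\eqref{PAQuotientHyp} gives us $\pa{u}{g^GN}\ge 0$ for every $g\in G$ and every nontrivial normal subgroup $N$. The unifying idea is that when $x$ lies in a coset that cannot collapse onto other conjugacy classes upon passing to a quotient, the nonnegativity of the aggregated partial augmentation $\pa{u}{x^GN}$ forces nonnegativity of $\pa{u}{x}$ itself.

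For part (1)(a), suppose $x\notin C_G(A)$. Since $A$ is cyclic and normal, I would take $N$ to be a suitable nontrivial normal subgroup contained in $A$ (for instance a minimal one, or $A$ itself) and examine the coset $x^G N$. The key observation is that $x\notin C_G(A)$ means $x$ acts nontrivially on $A$; by Lemma~\ref{GmoduloCabeliano} we know $(G',A)=1$, so conjugation by elements of $A$ cannot change the image $\omega_A(x)$, and more to the point the relevant classes $X\subseteq x^G N$ appearing in the sum \eqref{PAQuotient} should all be forced to equal $x^G$ itself, yielding $\pa{u}{x} = \pa{u}{x^G N}\ge 0$. The crux is verifying that no \emph{other} conjugacy class of $G$ meets $x N$, which is where the action of $x$ on $A$ being nontrivial is used: any $xn$ with $n\in N\subseteq A$ has the same action on $A$ as $x$, and one checks these all lie in the single class $x^G$. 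For part (1)(b), if $(x,G)$ contains a nontrivial normal subgroup $N$ of $G$, then $xN = x^GN$ by a direct computation (every $x^g = x(x,g)\in xN$), so again $\pa{u}{x}$ equals the manifestly nonnegative $\pa{u}{x^GN}$.

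For part (2), assume $u$ is not rationally conjugate to any element of $G$, and let $q$ be a prime dividing $|C_G(A)|$; I must show $q\mid |u|$. I would argue by contradiction combined with a minimal counterexample reduction: if some prime $q$ dividing $|C_G(A)|$ does not divide $|u|$, I want to produce a nontrivial normal subgroup $N$ of $G$ (a $q$-group inside the nilpotent normal subgroup $C_G(A)$, e.g. a minimal normal subgroup of $G$ contained in the center of $C_G(A)$, or the Sylow $q$-subgroup structure there) such that passing to $G/N$ preserves all the partial augmentation data of $u$. Concretely, by Lemma~\ref{Hertweckaumentocero} the hypothesis $q\nmid |u|$ means $\pa{u}{g}=0$ whenever $q\mid |g|$; combined with part (1) controlling the classes outside $C_G(A)$, this should let me show that the nonnegativity of $\pa{u}{g}$ for all $g\in G$ follows from the already-established nonnegativity of $\pa{\omega_N(u)}{\omega_N(g)}$ in the quotient, giving $u$ rationally conjugate to an element of $G$ by Proposition~\ref{ZC1PAugmentation} — contradicting our assumption.

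The main obstacle will be part (2): controlling how conjugacy classes of $G$ fuse when passing to the quotient $G/N$, and ensuring that the chosen $N$ is genuinely normal in $G$ (not merely in $C_G(A)$) while the classes carrying nonzero partial augmentation of $u$ map injectively. The nilpotency of $C_G(A)$ from Lemma~\ref{GmoduloCabeliano} is what makes a canonical choice of $q$-subgroup available, and the fact that $q\nmid|u|$ forces the support of $u$ to avoid the elements whose order is divisible by $q$, which is precisely the information needed to guarantee the fibers of $\omega_N$ over the support are singletons. I expect the delicate point to be a careful case analysis showing that the lifting of a nonnegativity statement from $G/N$ back to $G$ is valid on exactly the classes where $u$ is supported.
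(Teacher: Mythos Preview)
Your overall plan and your treatment of part~(2) are correct and match the paper's approach (the paper first enlarges $A$ to a cyclic normal subgroup whose order is divisible by the prime in question, then takes $N=A_p$; your choice of $N$ inside $Z(C_G(A))_q$ works by the same reasoning, since any $x\in C_G(A)$ centralises such an $N$ and hence the $q$-part of every element of $x^GN$ is visible).

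There is, however, a genuine gap in your handling of part~(1). In (1)(b) you write ``every $x^g = x(x,g)\in xN$'', which would require $(x,g)\in N$; but the hypothesis is the \emph{reverse} containment $N\subseteq (x,G)$, and $(x,G)$ here is merely the \emph{set} of commutators $\{(x,g):g\in G\}$, not a subgroup. Even granting $x^GN = xN$, that equality would not give $\pa{u}{x} = \pa{u}{x^GN}$, because $xN$ may meet several conjugacy classes. What you actually need is $x^GN = x^G$, and the argument runs the other way: for $n\in N$ and $h\in G$, normality gives $n^{h^{-1}}\in N\subseteq (x,G)$, say $n^{h^{-1}}=(x,g)$, and then $x^h n = (x(x,g))^h = x^{gh}\in x^G$. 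A similar caution applies to (1)(a): not every nontrivial $N\subseteq A$ yields $x^GN = x^G$ (in particular $N=A$ generally does not, and a minimal normal subgroup of $G$ inside $A$ may lie in a Sylow part on which $x$ acts trivially). One needs $N\subseteq (x,A)$; since $A$ is cyclic and $\Aut(A)$ is abelian, each $x^g$ acts on $A$ exactly as $x$ does, so the paper's $N=\GEN{(a,x^g):a\in A,\,g\in G}$ is just the nontrivial subgroup $(x,A)$ of $A$, automatically normal in $G$, and with this choice the verification $x^GN=x^G$ goes through.
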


\begin{proof}
\eqref{aumentofueraC}
Let $x \in G\setminus C_G(A)$ and take $N=\GEN{(a,x^g) : a\in A, g\in G}$.
Then $N$ is a non-trivial normal subgroup of $G$ contained in $G'$.
By Remark~\ref{Formulaaumentoscociente}.\eqref{PAQuotientHyp} we have $\pa{u}{x^GN}\ge 0$.
On the other hand,  using $(G',A)=1$ (Lemma~\ref{GmoduloCabeliano}), we have that if $g,h\in G$ and $a\in A$ then
	\begin{align*}
	(a, x^h)x^g &
	= a^{-1} a ^{x^h} x^g
	= a^{-1} a ^{x^h} (x^h)^{(h^{-1}g)} (x^h)^{-1} x^h \\
	&= a^{-1} a ^{x^h} (h^{-1}g, (x^h)^{-1}) x^h
	= a ^{x^h} (h^{-1}g, (x^h)^{-1}) a^{-1} x^h
	= a ^{x^h} x^g (a ^{x^h})^{-1} \\
	& = x^{g (a ^{x^h})^{-1}} \in x^G.
	\end{align*}
This proves that $x^GN=x^G$. Therefore $\pa{u}{x}=\pa{u}{x^GN}\ge 0$, as desired.

\eqref{NotNormalSubgroup}
Let $N$ be a non-trivial normal subgroup of $G$ contained in $(x,G)$. 
Then for every $n\in N$ and $h\in G$  there is $g\in G$ with $n^{h^{-1}}=(x,g)$. 
Thus $x^hn = (x n^{h^{-1}})^h = (x (x ,g))^h = x^{gh}\in x^G$.
Therefore $\varepsilon_{x}(u)=\varepsilon_{Nx^G}(u)\ge 0$, by Remark~\ref{Formulaaumentoscociente}.\eqref{PAQuotientHyp}.

\eqref{PrimesDividingA}
Let $m=|u|$. By means of contradiction, let $p$ be a prime divisor of the order of $C_G(A)$ not dividing $m$. Let $D=Z(C_G(A))$. 
We claim that  $G$ contains a cyclic normal subgroup $B$ such that $A\subseteq B \subseteq D$ and $p\mid |B|$. 
This is clear if $p$ divides the order of $A$. 
So suppose that $p\nmid |A|$.
As $C_G(A)$ is nilpotent, $p$ divides the order of $D$. 
Then $D/A$ is a normal subgroup of $G/A$ of order divisible by $p$. 
As $G/A$ is nilpotent and $[D:A]$ is multiple of $p$, there is $d\in D$ such that $dA$ is a central element of order $p$ in $G/A$. Thus $B=\GEN{A,d}$ is a cyclic normal subgroup of $G$ of order divisible by $p$. This finishes the proof of the claim.
Replacing $A$ by $B$ we may assume without loss of generality that $p$ divides the order of $A$.

By Proposition~\ref{ZC1PAugmentation} there is $x\in G$ with $\varepsilon_x(u) < 0$ and by \eqref{aumentofueraC} we have that $x\in C_G(A)$. By Lemma~\ref{Hertweckaumentocero}, $\varepsilon_g(u)=0$ for every $g\in G$ with $g_p\ne 1$ and, in particular, $x_p=1$.
However, $A_p$ is a non-trivial normal subgroup of $A$ and hence $\sum_{X\in \Cl(G), X\subseteq x^GA_p} \pa{u}{X} = \pa{u}{x^G A_p}\ge 0$ by \eqref{PAQuotient} and Remark~\ref{Formulaaumentoscociente}.\eqref{PAQuotientHyp}. Hence $\pa{u}{g}>0$ for some $g\in x^GA_p$.
This leads to a contradiction since $(x,A)=1$ and hence $g_p\ne x_p = 1$.
\end{proof}

\begin{lemma}\label{ciclicopgrupo}
Suppose that $A$ is a Hall subgroup of $G$. Then
\begin{enumerate}
\item\label{CentralizadorAigualCentralizadorSocle} $C_G(A)=C_G(\Soc(A))$.
\item\label{CenterpElement}  If $Z(G)_p = 1$ for some prime integer $p$ then the Sylow $p$-subgroups of $G$ are abelian.
\end{enumerate}
\end{lemma}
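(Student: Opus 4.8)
The plan is to prove the two statements separately. For \eqref{CentralizadorAigualCentralizadorSocle} the inclusion $C_G(A)\subseteq C_G(\Soc(A))$ is clear from $\Soc(A)\subseteq A$, so the content is the reverse inclusion. I would pick $g\in C_G(\Soc(A))$, let $\alpha\in\Aut(A)$ be conjugation by $g$, and aim to show $\alpha=\Id$. The first key point is arithmetic: since $A$ is abelian we have $A\subseteq C_G(A)$, whence $[G:C_G(A)]$ divides $[G:A]$, and because $A$ is a Hall subgroup this index is coprime to $|A|$. As $\alpha$ is the image of $g$ under $G\to\Aut(A)$ with kernel $C_G(A)$, its order divides $|G/C_G(A)|$ and is thus coprime to $|A|$, so $\GEN{\alpha}$ acts coprimely on the abelian group $A$. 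The second key point is that coprime action gives the decomposition $A=C_A(\alpha)\times[A,\alpha]$, with $\Soc(A)\subseteq C_A(\alpha)$ by hypothesis. If $\alpha\neq\Id$ then $[A,\alpha]\neq 1$, so being a nontrivial subgroup of the cyclic group $A$ it contains an element of prime order, which necessarily lies in $\Soc(A)\subseteq C_A(\alpha)$; this contradicts $[A,\alpha]\cap C_A(\alpha)=1$. Hence $\alpha=\Id$ and $g\in C_G(A)$.

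For \eqref{CenterpElement} I would split according to whether $p$ divides $|A|$, using that $A$ is Hall. If $p\mid|A|$ then, $A$ being a Hall subgroup, its Sylow $p$-subgroup $A_p$ is already a Sylow $p$-subgroup of $G$; being cyclic it is abelian and there is nothing more to prove (the hypothesis $Z(G)_p=1$ is not even needed here). So the real case is $p\nmid|A|$. Here the plan is to use Schur--Zassenhaus: since $A$ is a normal Hall subgroup it has a complement, so $G=A\rtimes H$ with $H\cong G/A$ nilpotent. Write $H=H_p\times H_{p'}$; as $p\nmid|A|$ the factor $P:=H_p$ is a Sylow $p$-subgroup of $G$, and it suffices to show this $P$ is abelian. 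Because $A$ is cyclic, $\Aut(A)$ is abelian, so the conjugation map $H\to\Aut(A)$ kills $[H,H]$; in particular $[P,P]=[H_p,H_p]$ centralizes $A$.

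The decisive step is then the following. Assume, for contradiction, that $P$ is nonabelian. Then $[P,P]$ is a nontrivial normal subgroup of the $p$-group $P$, so it meets the center nontrivially; choose $1\neq t\in[P,P]\cap Z(P)$. On one hand $t\in Z(P)=Z(H_p)$ commutes with $H_{p'}$ (direct factor), so $t\in Z(H)$; on the other hand $t\in[P,P]$ centralizes $A$. Since $G=AH$, the element $t$ centralizes all of $G$, i.e.\ $1\neq t\in Z(G)_p$, contradicting $Z(G)_p=1$. Therefore $P$ is abelian.

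I expect the main obstacle to be locating the correct mechanism in \eqref{CenterpElement}. A tempting but flawed route is to work directly inside $D=Z(C_G(A))$: its $p$-part $W$ is a normal $p$-subgroup of $G$ and $Z(G)_p=C_W(G)$, so one would like to produce fixed points of the $G$-action on $W$. However that action factors through $G/C_G(A)$, whose order need not be coprime to $p$, so a nontrivial $p$-group can act without nonzero fixed points and the argument stalls. The resolution is to pass to an honest complement via Schur--Zassenhaus: this turns the $p$-part of $G$ into a direct factor $H_p$ of the nilpotent complement, where $Z(H_p)$ sits inside $Z(H)$, and combines cleanly with the fact that $[H,H]$ centralizes the cyclic group $A$. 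For \eqref{CentralizadorAigualCentralizadorSocle} the only subtlety is the interplay of the two coprimality facts (the order of $\alpha$ versus the primes dividing $|A|$), which the coprime-action decomposition handles at once.
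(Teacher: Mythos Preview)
Your proof is correct. For part~\eqref{CenterpElement} your argument is essentially the paper's: both take a nilpotent complement of $A$ (your $H$, the paper's $N$), observe that the kernel of the conjugation map $H_p\to\Aut(A)$ contains $[H_p,H_p]$ and so is nontrivial when $H_p$ is nonabelian, and then intersect with $Z(H_p)$ to obtain a nontrivial central $p$-element of $G$. Your explicit case split on whether $p\mid|A|$ is absorbed in the paper by the single remark that if some Sylow $p$-subgroup of $G=A\rtimes N$ is nonabelian then $N_p$ must be nonabelian (which forces $p\nmid|A|$).

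For part~\eqref{CentralizadorAigualCentralizadorSocle} your route differs from the paper's. The paper decomposes both $A$ and the complement $N$ into Sylow pieces and uses, prime by prime, that for $q\ne p$ one has $C_{N_q}(A_p)=C_{N_q}(\Soc(A_p))$ (a $q$-automorphism of the cyclic $p$-group $A_p$ fixing its unique subgroup of order $p$ is trivial), then reassembles via $C_N(A)=\prod_q\bigcap_p C_{N_q}(A_p)$. You instead argue globally: the single automorphism $\alpha$ has order coprime to $|A|$ because $A$ is Hall, so the Fitting decomposition $A=C_A(\alpha)\times[A,\alpha]$ applies, and a nontrivial $[A,\alpha]$ would contain a prime-order element of $A$, hence meet $\Soc(A)\subseteq C_A(\alpha)$, contradicting the direct product. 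The underlying fact is the same in both approaches; yours avoids the double product over primes and does not need to name a complement of $A$ at all.
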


\begin{proof}
By assumption $G=A\rtimes N$ where $A$ and $N$ are Hall subgroups of $G$ and $N$ is nilpotent.

\eqref{CentralizadorAigualCentralizadorSocle} If $p$ and $q$ are prime integers with $p\mid |A|$ and $q\mid |N|$
then $q\ne p$. This implies that $C_{N_q}(A_p)=C_{N_q}(\Soc(A_p))$.
Thus
    $$C_N(A)=\prod_{q\mid |N|}\bigcap_{p\mid |A|} C_{N_q}(A_p)=
    \prod_{q\mid |N|} \bigcap_{p\mid |A|} C_{N_q}(\Soc(A_p))=C_N(\Soc(A))$$
and therefore $C_G(A)=A\rtimes C_N(A)=A\rtimes C_N(\Soc(A))=C_G(\Soc(A))$.

\eqref{CenterpElement} Suppose that one Sylow $p$-subgroup of $G$ is non-abelian. Then $N_p$ is non-abelian. Consider the homomorphism $\alpha : N_p \rightarrow \Aut(A)$ mapping $g\in N_p$ to the automorphism of $A$ given by conjugation by $g$.
Since $\Aut(A)$ is abelian, $\ker(\alpha)$ is a non-trivial normal subgroup of $N_p$. As $N$ is nilpotent, $1\ne \ker(\alpha) \cap Z(N_p) \subseteq Z(G)_p$.
\end{proof}

\begin{proofof} {\it Theorem \ref{CyclicBypGroup}}. Assume that $p$ is a prime integer, $G$ is a finite group and $A$ is a normal cyclic subgroup of $G$ such that the factor group $G/A$ is a $p$-group. We may assume without loss of generality that $A$ is a Hall $p'$-subgroup.
Then $G=A\rtimes P$ where $P$ is a Sylow $p$-subgroup of $G$.
Furthermore, by means of contradiction, we assume that $G$ is a counterexample of minimal order to the theorem and that $u$ is a torsion unit of $V(\Z G)$ of minimal order which is not rationally conjugate to any element of $G$.
Hence the Zassenhaus Conjecture holds for every proper  quotient of $G$.
By Proposition~\ref{ZC1PAugmentation}, there exists $x\in G$ such that  $\pa{u}{x}<0$.
As $C_G(A)$ has at most one non-cyclic Sylow subgroup, by Theorem~\ref{G'OneNonCyclic}, $u\not\in V(\Z G,C_G(A))$ i.e. $\omega_{C_G(A)}(u)\ne 1$.
Furthermore, by Lemma~\ref{NegativeAugmentationAndOrder}.\eqref{aumentofueraC} it follows that $x$ lies in $C_G(A)$.

Set $m=|u|$. By Lemma~\ref{NegativeAugmentationAndOrder}.\eqref{PrimesDividingA}, $m$ is multiple of every prime dividing the order of $A$. Moreover, $p$ divides $m$, since $\omega_{C_G(A)}(u)\ne 1$. Thus every prime divisor of the order of $G$ divides $m$. It is well-known that (ZC) holds for cyclic-by-abelian groups \cite{CaicedoMargolisdelRio2013}. Thus $P$ is not abelian.
By Lemma~\ref{ciclicopgrupo}.\eqref{CenterpElement}
there exists a central element $z$ of $G$ of order $p$.
Then, by the induction hypothesis $\omega_{\GEN{z}}(u)$ is conjugate to
$\omega_{\GEN{z}}(g)$ in the units of $\Q (G/\GEN{z})$, for some
$g\in G$.
As $1\neq \omega_{C_G(A)}(u)= \omega_{C_G(A)/\GEN{z}}(\omega_{\GEN{z}}(u))$ we have $\omega_{\GEN{z}}(g)\notin C_G(A)/\GEN{z}$. Hence $g\notin C_G(A)$. On the other hand,
by the above paragraph the order of $\omega_{\GEN{z}}(u)$, $\omega_{\GEN{z}}(g)$ and $g$ are divisible by all the prime divisors of the order of $A$. Then $\Soc(A)\subseteq \GEN{g_{p'}}\subset \GEN{g}$ and hence
$C_G(g)\subseteq C_G(g_{p'})\subseteq C_G(\Soc(A))=C_G(A)$, by  Lemma~\ref{ciclicopgrupo}.\eqref{CentralizadorAigualCentralizadorSocle}. Thus, $g$ lies in $C_G(A)$, a contradiction.
This finishes the proof of Theorem~\ref{CyclicBypGroup}.
\end{proofof}

\section{A formula for partial augmentations}\label{SectionFormulaPA}

In this section we revisit a formula from \cite{CaicedoMargolisdelRio2013}.

For any abelian normal subgroup $N$ of $G$ let
$$\K_N = \left\{K\leq N  :
\matriz{{l} N/K  \text{ is cyclic and } K \text{ does not contain}  \\
	\text{any non-trivial normal subgroup of } G
} \right\}.$$

We start with the following lemma, which is a generalization of Remark 3.2 in \cite{CaicedoMargolisdelRio2013}.
It describes $\K_N$, in the cases we need.  

\begin{lemma}\label{FormaK}
	Let $G$ be a finite group containing a normal cyclic subgroup $A$ such that $G/A$ is nilpotent. Let $N$ be an abelian normal subgroup of $G$ containing $A$. Then
	$$\K_N = \{K\leq N : K\cap A=K\cap Z(G) =1 \hspace{0.2cm} \text{and} \hspace{0.2cm} N/K \hspace{0.2cm} \text{is cyclic}\}$$
	and for every $K\in \K_N$  we have
	$$|\K_N|\le |K|=\frac{|N|}{\Exp(N)}.$$
\end{lemma}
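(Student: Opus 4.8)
The plan is to prove the two assertions separately: first the explicit description of $\K_N$, and then, for $K\in\K_N$, the equality $|K|=|N|/\Exp(N)$ together with the bound $|\K_N|\le|K|$.

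For the description I would show that, for $K\le N$ with $N/K$ cyclic, the condition ``$K$ contains no non-trivial normal subgroup of $G$'' is equivalent to $K\cap A=K\cap Z(G)=1$. One implication is immediate: every subgroup of the cyclic normal subgroup $A$ is characteristic in $A$, hence normal in $G$, and every subgroup of $Z(G)$ is normal in $G$; so if $K\cap A\ne1$ or $K\cap Z(G)\ne1$ then $K$ contains a non-trivial normal subgroup. For the converse, suppose $1\ne M\unlhd G$ with $M\le K\le N$. Since $M,A\le N$ and $N$ is abelian, $[M,A]=1$. If $M\cap A\ne1$ we are done. Otherwise $M\cap A=1$, so $MA/A\cong M$ is a non-trivial normal subgroup of the nilpotent group $G/A$ and meets $Z(G/A)$ non-trivially; pulling back, there is $1\ne m\in M$ with $[m,g]\in A$ for all $g\in G$. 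As $M\unlhd G$ we also have $[m,g]\in M$, whence $[m,g]\in M\cap A=1$ and $m\in Z(G)$, giving $K\cap Z(G)\ne1$. The delicate point here is precisely this passage from centrality modulo $A$ to genuine centrality, which is where $M\cap A=1$ and the nilpotency of $G/A$ are used.

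For the size of a fixed $K\in\K_N$ I would argue one prime at a time, writing $N=\prod_pN_p$ and $K=\prod_pK_p$ with $K_p=K\cap N_p$, so that $N/K$ cyclic means each $N_p/K_p$ is cyclic and it suffices to prove $|N_p/K_p|=\Exp(N_p)$. The inequality $|N_p/K_p|\le\Exp(N_p)$ is automatic, a cyclic quotient having order dividing $\Exp(N_p)$. For the reverse, let $\Exp(N_p)=p^{f}$ and consider $\mho^{f-1}(N_p)$, the subgroup generated by the $p^{f-1}$-th powers; it is non-trivial (there is an element of order $p^{f}$) and characteristic in $N_p$, hence normal in $G$. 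Since $K$ contains no non-trivial normal subgroup, $\mho^{f-1}(N_p)\not\le K_p$, which forces an element of order $p^{f}$ to survive in $N_p/K_p$, so $|N_p/K_p|=p^{f}$. Multiplying over $p$ gives $|N/K|=\Exp(N)$, that is $|K|=|N|/\Exp(N)$.

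The bound $|\K_N|\le|K|$ is the main obstacle, and again I would reduce to a single prime, since $K\mapsto(K\cap N_p)_p$ identifies $\K_N$ with $\prod_p\K_{N_p}$ (the core conditions decouple over the characteristic subgroups $N_p$). Fix $p$ and put $p^{f}=\Exp(N_p)$. The crucial structural input is that $N_p[p]$ carries a $G$-invariant subgroup of order $p$: if $p\mid|A|$ this is the order-$p$ subgroup of $A\cap N_p$, while if $p\nmid|A|$ I would argue that, because $A$ centralizes $N$ and in the nilpotent group $G/A$ the $p'$-part centralizes the normal $p$-subgroup $N_pA/A$, the image of $G$ in $\Aut(N_p)$ is a $p$-group, and a $p$-group fixes a non-zero vector of the $\mathbb{F}_p$-space $N_p[p]$. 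Dualizing, $\widehat{N_p}$ then has a $G$-invariant subgroup $W$ of index $p$. By the size result, each $K\in\K_{N_p}$ corresponds, via annihilators, to a cyclic subgroup $\GEN{\chi}$ of $\widehat{N_p}$ of order $p^{f}$ generating $\widehat{N_p}$ as a $G$-module, and any such generator lies outside the proper invariant subgroup $W$. Hence the set $S$ of module-generators of order $p^{f}$ satisfies $|S|\le|\widehat{N_p}|-|W|=|\widehat{N_p}|(p-1)/p$, and since each relevant cyclic subgroup accounts for $\varphi(p^{f})=p^{f-1}(p-1)$ of them,
$$|\K_{N_p}|=\frac{|S|}{\varphi(p^{f})}\le\frac{|\widehat{N_p}|}{p^{f}}=\frac{|N_p|}{\Exp(N_p)},$$
and taking the product over $p$ yields $|\K_N|\le|N|/\Exp(N)$. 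I expect the genuinely delicate step to be the existence of this invariant line, i.e. that the nilpotency of $G/A$ forces the action on each $N_p$ to be through a $p$-group; the remainder is bookkeeping with finite abelian groups and Pontryagin duality.
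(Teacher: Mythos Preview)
Your proof is correct. The argument for the explicit description of $\K_N$ matches the paper's exactly, and your proof that $|K|=|N|/\Exp(N)$ via the characteristic subgroups $\mho^{f-1}(N_p)$ is the same idea as the paper's (which uses the single characteristic subgroup $N^{[N:K]}$ globally rather than working prime by prime).

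The genuine divergence is in the bound $|\K_N|\le|N|/\Exp(N)$. The paper observes that $\K_N$ is contained in the set $\LL_N$ of all subgroups $L\le N$ with $N/L$ cyclic of order $\Exp(N)$, and then bounds $|\LL_N|$ by a purely combinatorial count: via the lattice involution $\alpha$, $|\LL_N|$ equals the number $|Y_N|$ of cyclic subgroups of $N$ of order $\Exp(N)$, and a direct count of elements of maximal order in an abelian $p$-group gives $|Y_N|\le|N|/\Exp(N)$. This uses neither the $G$-action nor the nilpotency of $G/A$. Your route instead exploits the $G$-module structure, producing a $G$-invariant hyperplane $W$ in $\widehat{N_p}$ (dual to an invariant line in $N_p[p]$) that every module-generating character must avoid; this is correct and yields the same numerical bound, but it is less elementary and leans on the nilpotency hypothesis in the case $p\nmid |A|$. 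What your approach buys is a module-theoretic interpretation that could in principle be sharpened with more information about the $G$-action; what the paper's approach buys is simplicity and the stronger statement that the inequality already holds for the larger set $\LL_N$, as a fact about finite abelian groups alone. In particular, the step you flagged as ``genuinely delicate'' (forcing the action on $N_p$ through a $p$-group) is simply absent from the paper's argument.
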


\begin{proof}
	Every subgroup of $A$ and every subgroup of $Z(G)$ is normal in $G$. 
	Thus, if $K\in \K_N$ then $K\cap A=K\cap Z(G)=1$. 
	This proves one inclusion of the first equality.
	Conversely, let $K$ be a subgroup of $N$ containing a non-trivial normal subgroup $X$ of $G$ and such that $K\cap A = 1$.
	Let us use bar notation for reduction modulo $A$.
	Observe that $\overline{X} \cap Z(\overline{G}) \neq \bar{1}$ since $\overline{X}$ is a non-trivial normal subgroup of $\overline{G}$, and the latter is nilpotent.
	Let $x\in X$ with $\overline{x}\in Z(\overline{G})\setminus \{1\}$. 
	Then $(x,g)\in A$ for every  $g\in G$. 
	On the other hand, as $X$ is a normal subgroup of $G$, $(x,g)\in A\cap X \subseteq A\cap K = 1$.
	Thus $x$ is a central element of $G$ and therefore 
	$1\ne x \in  K\cap Z(G)$.
	This proves the first equality.
	
	Let $\LL_N=\{L\le N : [N:L]=\Exp(N) \text{ and } N/L \text{ is cyclic}\}$.
	To finish the proof we will show that $\K_N\subseteq \LL_N$ and $|\LL_N|\le \frac{|N|}{\Exp(N)}$.
	Indeed, suppose that $K\in \K_N$.  It is well known that there is an involution $\alpha$ of the lattice of subgroups of $N$ such that $N/H\cong \alpha(H)$ for every $H\le N$.
	In particular, $\alpha(K)$ is a cyclic subgroup of $N$ of order $[N:K]$ and hence $[N:K]\le \Exp(N)$.
	Suppose that $[N:K]<\Exp(N)$ and let $X=N^{[N:K]}$.
	Then $1\ne X \le K$.
	As $N$ is normal in $G$ and $X$ is a characteristic subgroup of $N$ we deduce that $X$ is normal in $G$.
	Thus $K$ contains a non-trivial normal subgroup of $G$, contradicting the hypothesis.
	Thus $[N:K]=\Exp(N)$, i.e. $K\in \LL_N$.
	Using the involution $\alpha$ we see that the cardinality of $\LL_N$ coincides with the cardinality of the set $Y_N$ of cyclic subgroups of $N$ of order $\Exp(N)$.
	So it remains to prove that $|Y_N|\le \frac{N}{\Exp(N)}$.
	For that we may assume without loss of generality that $N$ is a $p$-group because if $N=P\times Q$ with $P$ and $Q$ of coprime order then the map $(K,L)\mapsto K\times L$ defines a bijection from $Y_P\times Y_Q$ to $Y_N$.
	So suppose that $N$ is a $p$-group and let $p^e$ be the exponent of $N$.
	Then $N=P\times Q$ with $P\cong C_{p^e}^l$, for some $l\ge 1$ and $Q$ of exponent smaller than $p^e$.
	Then the elements of $N$ generating a cyclic subgroup of order strictly smaller than $p^e$ are precisely the elements of $P^p\times Q$. Therefore $N$ has exactly $|N|-p^{l(e-1)}|Q|$ elements of order $p^e$ and hence
	$|Y_N|=\frac{|N|-p^{l(e-1)}|Q|}{(p-1)p^{e-1}} = \frac{|N|}{p^e}\frac{p-p^{1-l}}{p-1}\le \frac{|N|}{p^e}$, because $p^{1-l}\le 1$.
\end{proof}

Given $g,h\in G$ and a subgroup $K$ of $G$ let 
$$X_{K,g,h}=\{t\in G : g^t \in h K\}$$
and 
$$Y_{K,g,h}= \{k \in K : k = (g^h,h^{-1}x) \text{ for some }
x\in G\}.$$

The following lemma will be used in Section~\ref{SectionCyclicByHamiltonian}.

\begin{lemma}\label{CardinalXs}
	Let $g\in G$, $x\in X_{K,g,h}$. 
	Then $|X_{K,g,h}|$ is a multiple of $|C_G(g)|$ and 	$|X_{K,g,h} | \le |C_G(g)|\; |Y_{K,g,x}|$.
\end{lemma}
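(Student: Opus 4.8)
The plan is to deduce both assertions from a single observation: $C_G(g)$ acts freely on $X_{K,g,h}$ by left multiplication, and there is a natural injection from the resulting coset space into $Y_{K,g,x}$.

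First I would record that $C_G(g)$ acts on $X_{K,g,h}$ by left multiplication. Indeed, if $t\in X_{K,g,h}$ and $c\in C_G(g)$, then $g^{ct}=t^{-1}c^{-1}gct=t^{-1}gt=g^t\in hK$, so $ct\in X_{K,g,h}$; conversely every element of $C_G(g)t$ arises this way. Hence $X_{K,g,h}$ is a disjoint union of left cosets of $C_G(g)$, and since the action is obviously free each coset has size $|C_G(g)|$. As $x\in X_{K,g,h}$ guarantees $X_{K,g,h}\ne\emptyset$, this already gives that $|C_G(g)|$ divides $|X_{K,g,h}|$, and the number of cosets is exactly $|X_{K,g,h}|/|C_G(g)|$.

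For the inequality I would define a map $\phi\colon X_{K,g,h}\to K$ by $\phi(t)=(g^x)^{-1}g^t$. Since both $g^x$ and $g^t$ lie in $hK$ (because $x,t\in X_{K,g,h}$), writing $g^x=hk_1$ and $g^t=hk_2$ gives $\phi(t)=k_1^{-1}k_2\in K$, so $\phi$ really lands in $K$. The key computation, which I expect to be the main (though routine) technical point, is the commutator identity
$$(g^x,x^{-1}t)=(g^x)^{-1}(x^{-1}t)^{-1}(g^x)(x^{-1}t)=(g^x)^{-1}g^t,$$
obtained by substituting $g^x=x^{-1}gx$ and cancelling the pair $xx^{-1}$. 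This exhibits $\phi(t)$ in the form $(g^x,x^{-1}t)$ with $t\in G$, so in fact $\phi(t)\in Y_{K,g,x}$; thus $\phi$ maps $X_{K,g,h}$ into $Y_{K,g,x}$.

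Finally I would verify that $\phi$ is constant on left $C_G(g)$-cosets and separates distinct cosets. For $c\in C_G(g)$ we have $g^{ct}=g^t$, hence $\phi(ct)=\phi(t)$; and $\phi(t)=\phi(t')$ forces $g^t=g^{t'}$, i.e. $t't^{-1}\in C_G(g)$, so $t$ and $t'$ lie in the same coset. Therefore $\phi$ induces an injection from the set of left $C_G(g)$-cosets contained in $X_{K,g,h}$ into $Y_{K,g,x}$, which yields $|X_{K,g,h}|/|C_G(g)|\le |Y_{K,g,x}|$ and hence the stated bound. The only points requiring care are the bookkeeping between the two roles of $x$ (the fixed element of $X_{K,g,h}$ and the bound variable in the definition of $Y_{K,g,x}$) and carrying out the commutator computation under the conventions $(a,b)=a^{-1}b^{-1}ab$ and $a^b=b^{-1}ab$.
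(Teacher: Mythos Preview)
Your proof is correct and follows essentially the same approach as the paper: both use the $C_G(g)$-action by left multiplication to decompose $X_{K,g,h}$ into cosets (giving the divisibility), and both obtain the inequality via the injection of the coset space into $Y_{K,g,x}$ induced by $t\mapsto (g^x,x^{-1}t)=(g^x)^{-1}g^t$. The only difference is presentational---you package the map as $\phi$ and verify its properties explicitly, while the paper argues a bit more tersely---but the underlying argument is identical.
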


\begin{proof}
Clearly $C_G(g)X_{K,g,h}\subseteq X_{K,g,h}$ and therefore $X_{K,g,h}$ is a union of right $C_G(g)$-cosets and, in particular, $|X_{K,g,h}|$ is a multiple of $|C_G(g)|$. 
	
	Let $x,y\in X_{K,g,h}$. Then 
	$$(g^x,x^{-1}y) = x^{-1} g^{-1} x y^{-1} g y = (g^x)^{-1} g^y \in K.$$
	Therefore $(g^x,x^{-1}y)\in Y_{K,g,x}$. 
	Moreover, if $z$ is another element of $X_{K,g,h}$ then $(g^x,x^{-1}y)=(g^x,x^{-1}z)$ if and only if $g^y=g^z$ if and only if $yz^{-1} \in C_G(g)$. 
	This proves that $C_G(g)y\mapsto (g,xy^{-1})$ defines an injective map from the set of right $C_G(g)$-cosets contained in $X_{K,g,h}$ to $Y_{K,g,x}$. Thus $|X_{K,g,h} | \le |C_G(g)|\; |Y_{K,g,x}|$.
\end{proof}

For a square matrix $U$ with entries in $\C$ and $\alpha \in \C$, let $\mu_U(\alpha)$ denote the multiplicity of $\alpha$ as eigenvalue of $U$.

For an abelian normal subgroup of $G$ and every $K\in \K_N$, we select a linear character $\psi_K$ of $N$ with kernel $K$ and we fix a representation $\rho_K$ of $G$ affording the induced character
$\psi_K^G$. Observe that if $K_1$ and $K_2$ are conjugate in $G$, then $\psi_{K_1}^G=\psi_{K_2}^G$ and therefore we may assume $\rho_{K_1}=\rho_{K_2}$.

\begin{proposition}\label{PartialAugmentationTraces}
	Let $G$ be a finite group such that (ZC) holds for every proper quotient of $G$.
	Let $N$ be an abelian normal subgroup of $G$, let $\K=\K_N$ and let $\mathcal{K}$ be a set of representatives of the $G$-conjugacy classes of the elements of $\;\K$ .
	Let $u$ be  an element of order $m$ in $V(\Z G)\setminus V(\Z G,N)$ such that every proper power of $u$ is rationally conjugate to an element of $G$. 
	Let $x\in N$ with $x^m=1$ and let $f$ be a positive integer such that $u^f$ is rationally conjugate to an element $\y$ of $N$. Then
	\begin{align}\label{KsPartialAugmentation}
	\begin{split}
	&\sum_{K\in \K} \varphi([N:K]) \; \mu_{\rho_K(u)}(\psi_K(x)) = \\
	& \hspace{1cm} \frac{\varphi(m)}{m}  \; |C_G(x)|  \; \varepsilon_{x}(u) +
	\frac{[C_G(\y):N]}{f} \;\sum_{K\in \mathcal{K}} \frac{\varphi([N:K])}{|N_G(K)|} \sum_{z\in \y^G} |X_{K,x^f,z}|.
	\end{split}
	\end{align}
\end{proposition}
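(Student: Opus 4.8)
The backbone is the standard formula expressing the multiplicity of a root of unity as an eigenvalue. Because $u^{m}=1$, each $\rho_K(u)$ has order dividing $m$ and is diagonalizable with $m$-th roots of unity as eigenvalues, and because $x^{m}=1$ the value $\psi_K(x)$ is itself an $m$-th root of unity. Orthogonality of the characters of $\Z/m\Z$ then gives
$$\mu_{\rho_K(u)}(\psi_K(x))=\frac{1}{m}\sum_{j=0}^{m-1}\psi_K(x)^{-j}\,\tr\rho_K(u^{j})=\frac{1}{m}\sum_{j=0}^{m-1}\psi_K(x^{-j})\,\psi_K^{G}(u^{j}),$$
using $\psi_K(x)^{-j}=\psi_K(x^{-j})$ and $\tr\rho_K(u^{j})=\psi_K^{G}(u^{j})$. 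Multiplying by $\varphi([N:K])$, summing over $K\in\K$ and interchanging the summations turns the left-hand side into $\frac{1}{m}\sum_{j=0}^{m-1}W_j$ with $W_j=\sum_{K\in\K}\varphi([N:K])\,\psi_K(x^{-j})\,\psi_K^{G}(u^{j})$, so the whole problem is to evaluate the $W_j$.

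To do this I would expand the induced character by the Frobenius formula: $\psi_K^{G}$ vanishes off the normal subgroup $N$ and for $g\in N$ one has $\psi_K^{G}(g)=\frac{1}{|N|}\sum_{t\in G}\psi_K(g^{t})=\frac{|C_G(g)|}{|N|}\sum_{h\in g^{G}}\psi_K(h)$. Writing $\psi_K^{G}(u^{j})=\sum_{C\in\Cl(G),\,C\subseteq N}\varepsilon_{g_C}(u^{j})\,\psi_K^{G}(g_C)$ by linearity and folding the scalar $\psi_K(x^{-j})$ inside (valid as $N$ is abelian and $\psi_K$ multiplicative), each $W_j$ becomes a sum over the classes $C\subseteq N$ and over $t\in G$ of the weighted character sums $S(b):=\sum_{K\in\K}\varphi([N:K])\,\psi_K(b)$ evaluated at $b=x^{-j}g_C^{\,t}$. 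The first genuine task is to evaluate $S(b)$: here the explicit description of $\K$ in Lemma~\ref{FormaK} (each $K\in\K$ meets $A$ and $Z(G)$ trivially and has index $\Exp(N)$) reduces $S$ to a Ramanujan/Möbius-type sum over the subgroup lattice of $N$, and this is what ultimately produces the clean rational constants in the statement.

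With $S$ in hand I would split the range of $j$ by $\gcd(j,m)$. For the $\varphi(m)$ indices coprime to $m$ the unit $u^{j}$ has order $m$ and is a Galois conjugate of $u$: if $\sigma\in\Gal(\Q(\zeta_m)/\Q)$ sends $\zeta_m\mapsto\zeta_m^{j}$ then the partial augmentations of $u^{j}$ are the $\sigma$-permutation of those of $u$, and in particular $\varepsilon_{x^{j}}(u^{j})=\varepsilon_{x}(u)$. These indices collectively yield the main term $\frac{\varphi(m)}{m}\,|C_G(x)|\,\varepsilon_{x}(u)$. For the indices with $\gcd(j,m)>1$ the unit $u^{j}$ is a proper power of $u$, hence rationally conjugate to a single group element $\delta_j$, so $\varepsilon_{g_C}(u^{j})\in\{0,1\}$ and $\psi_K^{G}(u^{j})=\psi_K^{G}(\delta_j)$ is non-zero only when $\delta_j\in N$. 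Since $\omega_N(u)$ has order $e>1$ (as $u\notin V(\Z G,N)$), such $u^{j}$ is conjugate into $N$ exactly when $e\mid j$, and then $u^{j}$ is a power of the element to which $u^{e}$ is conjugate; the hypothesis $u^{f}\sim\gamma$ pins this element down and, after replacing each $\psi_K(g^{t})$ by its constant value on the coset $g^{t}K$, lets me repackage all these contributions through the counting sets $X_{K,x^{f},z}=\{t\in G:(x^{f})^{t}\in zK\}$ with $z$ ranging over $\gamma^{G}$.

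The last step is to pass from the sum over all $K\in\K$ to the sum over orbit representatives $\mathcal{K}$: conjugate subgroups share the representation $\rho_K$, the $G$-action permutes the data with stabiliser $N_G(K)$ and orbit length $[G:N_G(K)]$, which accounts for the weight $\frac{\varphi([N:K])}{|N_G(K)|}$, while the factor $|C_G(\gamma)|/|N|$ from the Frobenius formula and the averaging over the relevant powers of $u$ combine into the prefactor $\frac{[C_G(\gamma):N]}{f}$; Lemma~\ref{CardinalXs} identifies and bounds the resulting counts as the $|X_{K,x^{f},z}|$ of the statement. The main obstacle is precisely the two bookkeeping computations hidden in the previous paragraph: evaluating the weighted character sum $S(b)$ exactly enough to see that the coprime-$j$ contributions assemble into $\frac{\varphi(m)}{m}|C_G(x)|\varepsilon_x(u)$ with the correct constant, and tracking the normalizing factors $\frac{[C_G(\gamma):N]}{f}$ and $\frac{\varphi([N:K])}{|N_G(K)|}$ through the collapse of the $t$- and $j$-sums and the reduction to $\mathcal{K}$.
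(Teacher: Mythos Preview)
Your overall plan—expand each multiplicity via orthogonality, split $j$ by $\gcd(j,m)$, and then collapse the proper-power terms using that $u^{j}$ is conjugate to a group element—is exactly the engine behind the formula, and it is essentially how \cite[Lemma~3.1]{CaicedoMargolisdelRio2013} is obtained.  The paper, however, does not redo this: it simply \emph{cites} that lemma to get
\[
\sum_{K\in\K}\varphi([N:K])\,\mu_{\rho_K(u)}(\psi_K(x))
=\frac{\varphi(m)}{m}\,|C_G(x)|\,\varepsilon_x(u)+\frac{1}{f}\sum_{K\in\K}\varphi([N:K])\,\mu_{\rho_K(u^{f})}(\psi_K(x^{f})),
\]
and then spends its effort only on the second sum.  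There the argument is very concrete: since $u^{f}$ is rationally conjugate to $\gamma\in N$, the matrix $\rho_K(u^{f})$ is conjugate to $\diag(\psi_K(\gamma^{t}):tN\in G/N)$, so $\mu_{\rho_K(u^{f})}(\psi_K(x^{f}))$ literally counts cosets $tN$ with $\gamma^{t}\in x^{f}K$.  Rewriting this count as $[C_G(\gamma):N]\,|x^{f}K\cap\gamma^{G}|$, regrouping the $K$-sum by $G$-orbits (which introduces the $1/|N_G(K)|$), and identifying $\sum_{t\in G}|(x^{f})^{t}K\cap\gamma^{G}|=\sum_{z\in\gamma^{G}}|X_{K,x^{f},z}|$ gives the stated formula.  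So the paper's route bypasses both of your ``bookkeeping obstacles'': there is no need to evaluate the Ramanujan-type sum $S(b)$ or to trace normalizations through a $j$-sum, because the second term is handled \emph{after} passing to $u^{f}$ and using its diagonal model directly.

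One point in your sketch is genuinely wrong and should be fixed.  You write that for $\gcd(j,m)=1$ ``the partial augmentations of $u^{j}$ are the $\sigma$-permutation of those of $u$, and in particular $\varepsilon_{x^{j}}(u^{j})=\varepsilon_{x}(u)$''.  This is false: partial augmentations are integers and there is no Galois action permuting them; for a unit $u$ that is \emph{not} rationally conjugate to a group element, $\varepsilon_{g}(u^{j})$ bears no simple relation to $\varepsilon_{g^{j'}}(u)$.  What \emph{is} true, and what actually makes the coprime block collapse, is the character-level identity $\chi(u^{j})=\sigma_{j}(\chi(u))$ for every character $\chi$, hence $\psi_K(x^{-j})\psi_K^{G}(u^{j})=\sigma_{j}\bigl(\psi_K(x^{-1})\psi_K^{G}(u)\bigr)$ and the sum over $j$ with $\gcd(j,m)=1$ becomes a trace $\tr_{\Q(\zeta_m)/\Q}$.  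Turning that trace into the clean term $\varphi(m)\,|C_G(x)|\,\varepsilon_x(u)$ still requires a real computation (this is where the structure of $\K_N$ and the hypothesis that (ZC) holds for proper quotients enter), so you cannot shortcut it via partial augmentations.  A smaller caveat: your appeal to Lemma~\ref{FormaK} to describe $\K_N$ uses the cyclic-by-nilpotent hypothesis and $A\subseteq N$, which are not assumed in the proposition; the cited lemma from \cite{CaicedoMargolisdelRio2013} handles the general abelian~$N$.
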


\begin{proof}
	By \cite[Lemma~3.1]{CaicedoMargolisdelRio2013} we have 
	\begin{align}\label{KsPartialAugmentationOriginal}
	\begin{split}
	&\sum_{K\in \K} \varphi([N:K]) \; \mu_{\rho_K(u)}(\psi_K(x)) = \\
	& \hspace{1cm} \frac{\varphi(m)}{m}  \; |C_G(x)|  \; \varepsilon_{x}(u) +
	\frac{1}{f}\sum_{K\in \K} \varphi([N:K]) \;  \mu_{\rho_K(u^f)}(\psi_K(x^f)).
	\end{split}
	\end{align}
	
	Choose a transversal $T$ of $N$ in $G$.
	Observe that $N\subseteq C_G(\y)$, as $N$ is abelian.
	Thus $|\{gN\in G/N : x^f \in \y^gK\}|=[C_G(\y):N]\;|x^f K\cap \y^G|$.
	As $u^f$ is conjugate to $\y$ in $\Q G$, $\rho_K(u^f)$, $\rho_K(\y)$ and $\diag(\psi_K(\y^g) : g\in T)$ are conjugated as complex matrices.
	Thus
	\begin{equation*}\begin{split}
	\mu_{\rho_K(u^f)}(\psi_K(x^f))& =|\{gN\in G/N : \psi_K(x^f)=\psi_K(\y^g)\}|
	= |\{gN\in G/N : \y^g \in x^f K\}| \\
	& =[C_G(\y):N]\;|x^fK\cap \y^G|.
	\end{split}\end{equation*}
	Therefore 
	\begin{equation}\label{MuCardinal}
	\begin{split}
	&\sum_{K\in \K} \varphi([N:K]) \;  \mu_{\rho_K(u^f)}(\psi_K(x^f)) 
	= \sum_{K\in \mathcal{K}} \frac{\varphi([N:K])}{|N_G(K)|} 
	\sum_{t\in G} \mu_{\rho_{K^t}(u^f)}(\psi_{K^t}(x^f)) \\
	&\hspace{3cm}  = [C_G(\y):N]\; 
	\sum_{K\in \mathcal{K}} \frac{\varphi([N:K])}{|N_G(K)|} \sum_{t\in G} |(x^f)^t K \cap \y^G|.
	\end{split}
	\end{equation}
	On the other hand,
	\begin{equation}\label{SumaCardinales}
	\sum_{t\in G} |(x^f)^t K \cap \y^G| = |\{(t,z)\in G\times \y^G : (x^f)^t \in zK \} |
	= \sum_{z\in \y^G} |X_{K,x^f,z}|.
	\end{equation}
	The lemma follows from \eqref{KsPartialAugmentationOriginal}, \eqref{MuCardinal} and \eqref{SumaCardinales}.
\end{proof}

\section{On the Zassenhaus Conjeture for cyclic-by-Hamiltonian groups}\label{SectionCyclicByHamiltonian}

In this section we investigate the Zassenhaus Conjecture for cyclic-by-Hamiltonian groups. 
In the first part of the section we do not consider any additional hypothesis. After Lemma~\ref{aumentofueraD} we consider a hypothetical minimal counterexample to (ZC) in the class of cyclic-by-Hamiltonian groups and prove some features of it. 
This is used to prove Theorem~\ref{CyclicByHamiltonian} at the end of the section.

So  throughout 

\begin{quote} 
$G$ is a finite group and $A$ is a cyclic normal subgroup of $G$ such that $G/A$ is Hamiltonian.
\end{quote}

We also fix a generator $a$ of $A$ and set 
    $$n=|A|,\quad C=C_G(A) \qand D=Z(C).$$ 
Furthermore, we assume that $G$ is not cyclic-by-abelian, for otherwise (ZC) holds for $G$ by the main result of \cite{CaicedoMargolisdelRio2013}. As a consequence, $n$ must be even. By Lemma~\ref{GmoduloCabeliano}, $C$ is nilpotent and contains $G'$. Thus  $|C|$, $|D|$ and $\Exp(D)$ are divisible by the same primes and, as $G/A$ is Hamiltonian, we have $G'\subseteq A\times \GEN{\nu}$ with $\nu$ an element of $G'\setminus A$ of order $2$.

Since $G'_{2'}$ is cyclic and $D$ is abelian, Theorem~\ref{G'OneNonCyclic} implies that

\begin{proposition}\label{ZCtrueforD}
 Every torsion element of $V(\Z G,D)$ is rationally conjugate to an element of $D$.
\end{proposition}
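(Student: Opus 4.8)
The plan is to obtain Proposition~\ref{ZCtrueforD} as an immediate instance of Theorem~\ref{G'OneNonCyclic}, applied with $N=D$ and $p=2$. By Lemma~\ref{GmoduloCabeliano} the subgroup $C=C_G(A)$ is normal and nilpotent, so $D=Z(C)$ is characteristic in $C$ and therefore a normal subgroup of $G$; being a centre it is abelian, hence nilpotent, and $V(\Z G,D)$ is defined. Two of the hypotheses of Theorem~\ref{G'OneNonCyclic} are then immediate for $N=D$, $p=2$: the group $D_{2'}$ is abelian because $D$ is, and $G'_{2'}$ is cyclic because the standing inclusion $G'\subseteq A\times\GEN{\nu}$ gives $G'_{2'}\subseteq A_{2'}$, which is cyclic. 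Since the conclusion of Theorem~\ref{G'OneNonCyclic} for $N=D$ is exactly the assertion of the proposition, the whole argument reduces to the one remaining hypothesis, the containment $G'\subseteq D$.

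To prove $G'\subseteq D=Z(C)$ I would use $G'\subseteq A\times\GEN{\nu}$ together with the obvious fact that $A\subseteq Z(C)$ (as $C$ centralises $A$); thus it suffices to show $\nu\in Z(C)$. Because $G/A$ is Hamiltonian, its derived subgroup $(G/A)'=\GEN{\overline{\nu}}$ is central of order $2$, so $[\nu,c]\in A$ for every $c\in C$. Moreover $\nu^2=1$ and $\nu$ centralises $A$, so writing $\nu^{c}=\nu[\nu,c]$ and using $(\nu^{c})^2=\nu^2=1$ one gets $[\nu,c]^2=1$; hence $[\nu,c]$ lies in the unique order-$2$ subgroup $\GEN{t}$ of the cyclic group $A$, where $t=a^{n/2}$. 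Consequently $\eta\colon C\to\GEN{t}$, $\eta(c)=[\nu,c]$, is a group homomorphism, and everything comes down to proving $\eta\equiv 1$.

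I would verify $\eta\equiv 1$ on generators of $C$ dictated by the decomposition $G/A\cong Q_8\times E\times B$. On elements of odd order $\eta$ vanishes since $\GEN{t}$ is a $2$-group. If $\overline{c}$ is a quaternion unit then $\overline{c}^{\,2}=\overline{\nu}$, so $c^2\in\nu A$; writing $c^2=\nu a'$ and conjugating by $c$ (using $c\in C_G(A)$) gives $c^2=c^2[\nu,c]$, whence $[\nu,c]=1$ directly. The delicate case is $\overline{c}\in E$, where no such square-root relation links $c$ to $\nu$. Here the decisive remark is that $G$ acts on each cyclic normal subgroup through its abelianisation $G^{\mathrm{ab}}$, so $\nu\in G'$ must act trivially on every such subgroup; combining this with the conjugation formulas in the Hamiltonian extension, which tie $[\nu,c]$ to the power $c^2\in A$, rules out $[\nu,c]=t$. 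This yields $\eta\equiv 1$, hence $\nu\in Z(C)$ and $G'\subseteq D$.

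The main obstacle is precisely this last point, the verification that $\nu\in Z(C)$ (equivalently $G'\subseteq D$), and within it the case $\overline{c}\in E$; once it is settled, Theorem~\ref{G'OneNonCyclic} delivers the proposition at once, all the remaining ingredients being routine bookkeeping.
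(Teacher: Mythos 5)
Your top-level strategy is exactly the paper's: the proposition is meant to follow from Theorem~\ref{G'OneNonCyclic} applied with $N=D$ and $p=2$, and you are right that the only hypothesis which is not immediate is the containment $G'\subseteq D$, equivalently $\nu\in Z(C)$. (The paper's own proof is a one-line citation which does not verify this containment at this point; the inclusion $A\times\GEN{\nu}\subseteq D$ is only recorded later, in Lemma~\ref{Casos}.\eqref{nuenD}, and there only under the minimal-counterexample hypothesis. Isolating this as the real content of the proof is to your credit.) Your observation that $\eta(c)=(\nu,c)$ defines a homomorphism $C\to\GEN{a^{n/2}}$, your disposal of the odd-order generators, and your treatment of the elements $c$ with $c^2\in\nu A$ are all correct.

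The gap is in the remaining case, which you yourself flag as the main obstacle: $c\in C$ a $2$-element with $c^2\in A$. The remark that $\nu\in G'$ must act trivially on every cyclic normal subgroup only disposes of those $c$ for which $\GEN{c,A}$ is cyclic, i.e.\ for which $c^2$ generates $A_2$; for the remaining $c$ the sentence ``combining this with the conjugation formulas \dots rules out $[\nu,c]=t$'' is an assertion, not an argument, and no computation in your text backs it up. The case can in fact be closed, for instance as follows: after multiplying $c$ by a suitable element of $A_2$ (which does not change $(\nu,c)$, since $(\nu,A)=1$) one may assume $c^2=1$; then $\bar c$ has order at most $2$ in the Hamiltonian group $G/A$, hence is central there, so $(x,c)=a^s\in A$ for a lift $x$ of a quaternion generator, and $1=(x,c^2)=(x,c)(x,c)^c=a^{2s}$ forces $s\in\{0,\tfrac{n}{2}\}$; writing $a^x=a^l$ one gets $(\nu,c)=(x^2,c)=(x,c)^x(x,c)=a^{s(l+1)}=1$, because $l$ is odd, so $l+1$ is even and $\tfrac{n}{2}(l+1)\equiv 0 \pmod n$. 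As submitted, however, this decisive step of your proof is missing.
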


\begin{lemma}\label{CommutatorOdd}
	If $g$ is a $2'$-element of $G$ then $(g,G)\subseteq A_{2'}$ and $(g,D_2)=1$.
\end{lemma}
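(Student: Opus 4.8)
```latex
The plan is to prove the two assertions separately, both by exploiting
the structure constraints on $G'$ that were just recorded, namely that
$G'\subseteq A\times\GEN{\nu}$ with $\nu$ an involution and
$(G',A)=1$ (Lemma~\ref{GmoduloCabeliano}). The key observation is that
for a $2'$-element $g$, the commutator $(g,G)$ is generated by elements
$(g,h)$ with $h\in G$, and each such commutator lies in $G'$; I want to
show these commutators are in fact $2'$-elements lying in $A$.

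First I would analyze $(g,G)\subseteq A\times\GEN{\nu}$. For any
$h\in G$ write $(g,h)=a_0\nu^{\epsilon}$ with $a_0\in A$ and
$\epsilon\in\{0,1\}$. The idea is to use that the map
$h\mapsto gh g^{-1}h^{-1}$, composed with reduction modulo $A$, lands in
the abelian $2$-group $\GEN{\nu}A/A$, so that conjugation by $g$ induces
an automorphism of $G/A$ whose restriction to the relevant factor has
order dividing a power of $2$; since $g$ is a $2'$-element, this induced
automorphism has odd order, forcing $\epsilon=0$. Concretely, I would
pass to the quotient $\ov{G}=G/A$: there $\ov{g}$ is a $2'$-element of
the Hamiltonian group $\ov{G}$, and in a Hamiltonian group every element
commutes with every $2'$-element of the quaternion part because the odd
part is central (a Hamiltonian group is $Q_8\times E\times O$ with $E$
elementary abelian $2$-group and $O$ abelian of odd order). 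Hence
$(\ov{g},\ov{h})=\ov{1}$ for all $h$, which gives $(g,h)\in A$, i.e.
$(g,G)\subseteq A$. Combining with Lemma~\ref{Hertweckaumentocero}-type
order considerations, or more directly by noting that $(g,h)$ is a
product of $g$-conjugates which are $2'$-elements when $g$ is, I would
conclude $(g,G)\subseteq A_{2'}$.

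For the second claim $(g,D_2)=1$, I would argue that $D_2$, being the
Sylow $2$-subgroup of the center $D=Z(C)$ of the nilpotent group
$C=C_G(A)$, consists of elements each of which commutes with the
$2'$-element $g$. The cleanest route is: $g$ normalizes $D_2$ (since
$D\unlhd G$ and $D_2$ is characteristic in $D$), so conjugation by $g$
gives an automorphism of the $2$-group $D_2$; but the order of $g$ is
odd, and I would show this automorphism is trivial by a coprimality
argument. The crucial input is that $D_2\subseteq C=C_G(A)$ centralizes
$A$, and that modulo $A$ the element $\ov{g}$ centralizes the image of
$D_2$ (again because odd-order elements are central in the Hamiltonian
$\ov{G}$); lifting this through the cyclic kernel $A$ and using that a
$2'$-automorphism of a $2$-group acting trivially modulo a central
subgroup must itself be trivial (coprime action, as in standard
coprime-action lemmas) yields $(g,D_2)=1$.

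The main obstacle I anticipate is the lifting step: passing from
commuting modulo $A$ to commuting in $G$. Knowing $(g,D_2)\subseteq A$
is not immediately enough; I need that this intersection is trivial. The
way I would close this is to observe $(g,D_2)\subseteq(g,G)\subseteq
A_{2'}$ from the first part, while simultaneously $(g,D_2)\subseteq
G'\cap D_2$; and since $D_2$ is a $2$-group, any commutator $(g,d)$ with
$d\in D_2$ that lies in the $2'$-group $A_{2'}$ must actually be a
$2$-element as a product of conjugates of $d$-related $2$-elements, so it
is simultaneously a $2$-element and a $2'$-element, hence trivial. This
simultaneous-order argument, combining the two halves of the lemma, is
the delicate point and is where I would be most careful to get the parity
bookkeeping right.
```
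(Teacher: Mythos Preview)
Your overall structure is sound, and your treatment of the second assertion $(g,D_2)=1$ is, after some unnecessary detours through coprime-action lemmas, exactly the paper's one-line argument: once $(g,G)\subseteq A_{2'}$ is known, normality of $D_2$ in $G$ gives $(g,D_2)\subseteq D_2$, whence $(g,D_2)\subseteq D_2\cap A_{2'}=1$.

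The genuine gap is in the first assertion. You correctly obtain $(g,G)\subseteq A$ from the fact that $2'$-elements are central in the Hamiltonian quotient $G/A$; this is also what the paper uses, tacitly. But your passage from $(g,h)\in A$ to $(g,h)\in A_{2'}$ does not work as written: the observation that $(g,h)=g^{-1}\cdot g^{h}$ is a product of two $2'$-elements does \emph{not} force the product to be a $2'$-element, since $g^{-1}$ and $g^{h}$ neither lie in $A$ nor commute in general. (The appeal to Lemma~\ref{Hertweckaumentocero} is misplaced; that lemma concerns partial augmentations of torsion units.)

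The paper closes this step by a short direct computation rather than a structural argument. With $b=(g,h)\in A$ and $k=|g|$ odd, one has $h^{g}=hb^{-1}$; since $A$ is cyclic there is an integer $r$ with $b$ conjugated to $b^{r}$, and iterating gives $h=h^{g^{k}}=hb^{\pm(1+r+\cdots+r^{k-1})}$, so $|b|$ divides $1+r+\cdots+r^{k-1}$. If $|b|$ were even then $r$ is odd and this sum is $\equiv k\equiv 1\pmod 2$, a contradiction. If you prefer to salvage your own line, a clean fix is: since $\Aut(A_2)$ is a $2$-group and $|g|$ is odd, $g$ centralizes $A_2$; passing to $\bar G=G/A_{2'}$, the element $\bar b\in\bar A\cong A_2$ commutes with $\bar g$, and then $|\bar g|=|\bar g^{\bar h}|=|\bar g\,\bar b|=|\bar g|\cdot|\bar b|$ forces $\bar b=1$, i.e.\ $b\in A_{2'}$.
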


\begin{proof}
	Let $g,h\in G$, $b=(g,h)$ and $k=|g|$ with $k$ odd. 
	Then $b\in A$ and $b^h=b^r$ for some integer $r$.
	Therefore $h=h^{g^k} = hb^{1+r+r^2+\dots r^{k-1}}$ and hence $|b|$ divides $1+r+r^2+\dots+r^{k-1}$. 
	So, if $b\not\in A_{2'}$ then $r$ is odd and $1+r+r^2+\dots+r^{k-1}$ is even and hence $0\equiv 1+r+\dots+r^{k-1} \equiv k \equiv 1 \mod 2$, a contradiction.
	This finishes the proof of the the first statement.
	The second is consequence of the first because $D_2$ is a normal in $G$ and hence $(g,D_2)= D_2\cap A_{2'}=1$.
\end{proof}

We now deal with some partial augmentations.

\begin{lemma}\label{aumentofueraD}
Suppose that (ZC) holds for proper quotients of $G$.
If $u$ is a torsion element of $V(\Z G)$ and $g \in G \setminus D$, then $\pa{u}{g} \ge 0$.
\end{lemma}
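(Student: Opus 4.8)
The plan is to reduce to the case $g\in C\setminus D$ and then invoke the two clauses of Lemma~\ref{NegativeAugmentationAndOrder}, which encode exactly the two mechanisms forcing a nonnegative partial augmentation. If $g\notin C=C_G(A)$, then $\pa{u}{g}\ge 0$ is immediate from Lemma~\ref{NegativeAugmentationAndOrder}.\eqref{aumentofueraC}. So I would assume $g\in C\setminus D$, i.e. $g\in C_G(A)$ but $g\notin Z(C)$, and aim to produce a nontrivial normal subgroup of $G$ contained in the commutator set $(g,G)=\{(g,h):h\in G\}$; once this is done, Lemma~\ref{NegativeAugmentationAndOrder}.\eqref{NotNormalSubgroup} finishes the proof.

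The structural facts I would use are the following. By Lemma~\ref{GmoduloCabeliano}, $C$ is nilpotent and $G'\subseteq C$; since $G/A$ is Hamiltonian one has $G'\subseteq A\times\GEN{\nu}$, which is abelian, so every commutator $(g,h)$ lies in $A\times\GEN{\nu}$. Moreover $A\subseteq Z(C)=D$, because each element of $C$ centralizes $A$; and as $n$ is even the unique involution $a^{n/2}$ of $A$ is fixed by every automorphism of $A$, whence $a^{n/2}\in Z(G)$. In particular $\GEN{a^{n/2}}$ is a central, hence normal, subgroup of $G$ of order $2$.

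The easy case is when some $c\in C$ satisfies $(g,c)\in A\setminus\{1\}$. Since $(g,c)\in A\subseteq Z(C)$ is central in $C$, an induction using $(g,c^{k})=(g,c)(g,c^{k-1})^{c}$ gives $(g,c^{k})=(g,c)^{k}$ for all $k$, so $\GEN{(g,c)}=\{(g,c^{k}):k\}\subseteq (g,G)$; as $A$ is cyclic and normal, $\GEN{(g,c)}$ is a nontrivial normal subgroup of $G$ and we are done. To reach this situation I would decompose the nilpotent group $C$ as the direct product of its Sylow subgroups $C_p$, so that $Z(C)=\prod_p Z(C_p)$ and $[C_p,C_q]=1$ for $p\ne q$. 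If the odd part $g_{2'}$ fails to be central in $C$, then $g_q\notin Z(C_q)$ for some odd $q$, so $(g,c_q)=(g_q,c_q)\ne 1$ for a suitable $c_q\in C_q$ (the other Sylow components of $g$ commute with $c_q$), and by Lemma~\ref{CommutatorOdd} this commutator lies in $A_{2'}\subseteq A$, putting us in the easy case.

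The hard part is the remaining case, in which the noncentrality of $g$ is concentrated in its $2$-part, so that $g_2\notin Z(C_2)$ while every commutator $(g,c)$ with $c\in C$ lies in $\{1\}\cup\nu A$, i.e. acquires a nontrivial $\nu$-component whenever it is nontrivial. This is the main obstacle, and it is precisely the configuration from which the minimal counterexample of Proposition~\ref{MinimalCounterexampleHamiltonian} is built. Here I would try to exhibit the central involution $a^{n/2}$ as a commutator $(g,h)$ — equivalently, to show that $g$ and $ga^{n/2}$ are $G$-conjugate — which by centrality of $a^{n/2}$ places $\GEN{a^{n/2}}\subseteq(g,G)$ and again invokes Lemma~\ref{NegativeAugmentationAndOrder}.\eqref{NotNormalSubgroup}. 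The inputs I expect to need are: the image of $g$ in the Hamiltonian quotient $G/A$ lies outside its center (otherwise all $(g,h)$ would lie in $A$, returning us to the easy case), so it has a nontrivial $Q_8$-component and there is $c_0$ with $(g,c_0)\in\nu A$; the relation $(g,c_0^2)=(g,c_0)(g,c_0)^{c_0}$ together with $\nu^{c_0}\in\nu\GEN{a^{n/2}}$ then pins $(g,c_0)$ down to one of $\nu$, $\nu a^{n/2}$, $\nu a^{\pm n/4}$. Tracking these possibilities, and using suitable elements outside $C$ to manufacture $a^{n/2}$ (or another $G$-invariant subgroup of $A\times\GEN{\nu}$) inside $(g,G)$, is the delicate computation; this case analysis on the Sylow $2$-subgroup is exactly what the subsequent sections are organized to carry out.
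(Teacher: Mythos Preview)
Your reduction to $g\in C\setminus D$ and your ``easy case'' (some $(g,c)$ with $c\in C$ lands in $A\setminus\{1\}$, so $\GEN{(g,c)}\le A$ is normal in $G$ and contained in $(g,G)$) are correct and cover what the paper does in its first subcases. The gap is in your ``hard part'': you leave it unfinished and assert that ``the subsequent sections are organized to carry [it] out''. They are not. Lemma~\ref{aumentofueraD} is proved completely and self-containedly in the paper, and it is an \emph{input} to everything that follows (the choice of $\x\in D$ with $\pa{u}{\x}<0$, Lemma~\ref{Casos}, Proposition~\ref{MinimalCounterexampleHamiltonian}); nothing later circles back to finish it.

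Moreover, the specific target you aim at in the hard case---producing $a^{n/2}$ as a commutator $(g,h)$---is the wrong one and need not be attainable. The paper's argument in the residual case (where $(g,\nu)=1$, $(g,y^2)=1$ for all $y\in C$, and $(g,c)\notin A$ whenever $c\in C\setminus C_C(g)$) runs as follows: from $(g,x)(g,x)^x=(g,x^2)=1$ one gets $(g,x)^x=(g,x)^{-1}$, whence every nontrivial $(g,c)$ with $c\in C$ lies in $\{\nu,a^{n/2}\nu\}$; since $|g^C|$ is a power of $2$ one may assume $g^C=\{g,\nu g\}$; this forces $C=C_G(A,\nu)$; finally, for arbitrary $h\in G$ one writes $g^h=ug$, $x^h=vx$ with $u,v\in\GEN{A,\nu}$ and computes $\nu^h=(g^h,x^h)=(ug,vx)=(g,x)=\nu$. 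Thus $\nu$ is \emph{central} in $G$, so $\GEN{\nu}=\{1,\nu\}\subseteq(g,G)$ is a nontrivial normal subgroup of $G$, and Lemma~\ref{NegativeAugmentationAndOrder}.\eqref{NotNormalSubgroup} (equivalently, the paper's Claim) finishes the proof with $N=\GEN{\nu}$, not $\GEN{a^{n/2}}$. In this configuration $(g,C)=\{1,\nu\}$, and the paper never shows, nor needs, that $a^{n/2}\in(g,G)$.
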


\begin{proof}
Let $C_1=C_G(A,\nu)$.
Let $x,y\in G$ such that $(x,y)\not\in A$. 
As $G/A$ is Hamiltonian, $\GEN{A,\nu}=\GEN{A,(x,y)}=\GEN{A,x_2^2}\subseteq \GEN{A,x^2}$.
Therefore $\nu\in \GEN{x^2,A}$ and hence, if moreover $x\in C$ then $x\in C_1$.

\textbf{Claim}: If there is $x\in G$ such that  $\GEN{(g,x)}$ is  non-trivial and normal in $G$ and $((g,x),g)=((g,G),x)=1$ then $\pa{u}{g}\ge 0$.

Indeed, by assumption, for every $h\in G$ we have
$$(g,x)g^h =(g,x)g(g,h)=g(g,x)(g,h)=g(g,x)(g,h)^x = g(g,hx)=g^{hx}.$$
Thus $g^G N=g^G$ for $N=\GEN{(g,x)}$. Therefore $\pa{u}{g}=\pa{u}{g^GN}\ge 0$, as desired.

Suppose that $g\in G\setminus D$.
If $g\not\in C$ then $\pa{u}{g}\ge 0$ by Lemma~\ref{NegativeAugmentationAndOrder}.\eqref{aumentofueraC}.
Thus we may assume that $g\in C$.
If $(g,\nu)\ne 1$ then $(g,\nu)=a^{\frac{n}{2}}$ and hence $\GEN{a^{\frac{n}{2}}}$ is a non-trivial normal subgroup of $G$ contained in $(g,G)$. Then $\pa{u}{g}\ge 0$ by Lemma~\ref{NegativeAugmentationAndOrder}.\eqref{NotNormalSubgroup}.
Thus we may assume that $(g,\nu)=1$.
Suppose that $(g,y^2)\ne 1$ for some $y\in C$.
Then $y^2\in C_1$ and $(g,y^2)=(g,y)(g,y)^y\in A$, since $[G':G'\cap A]\le 2$.
Thus $g$ and $x=y^2$ satisfy the conditions of the claim and hence $\pa{u}{g}\ge 0$.
Thus as well as $(g,\nu)=1$ we may also assume that $(g,x^2)=1$ for every $x\in C$.
Therefore $C/C_C(g)$ is an elementary abelian 2-group and hence $|g^C|$ is a power of $2$.
Moreover, $(g,x)(g,x)^x=1$, i.e. $(g,x)^x=(g,x)^{-1}$ for every $x\in C$.
If $(g,x)\in A$ then $(g,x)=(g,x)^x=(g,x)^{-1}$ and hence $(g,x)\in \GEN{a^{\frac{n}{2}}}$.
Therefore, if $(g,x)\in A\setminus \{1\}$ then $\GEN{(g,x)}$ is a non-trivial normal subgroup of $G$ contained in $(g,G)$. In such case $\pa{u}{g}\ge 0$, by Lemma~\ref{NegativeAugmentationAndOrder}.\eqref{NotNormalSubgroup}.  
Therefore we may assume that $(g,x)\not\in A$ for every $x\in C\setminus C_C(g)$. 

Then for every $x\in C\setminus C_C(g)$ we have $(g,x)=a^i\nu$ for some $i$ and hence $a^i \nu^x = (g,x)^x = (g,x)^{-1}=a^{-i}\nu$. 
Therefore $\nu^x=\nu$ and $a^i=a^{-i}$. Thus $a^i \in\GEN{a^{\frac{n}{2}}}$ and $(g,x)\in \{\nu,a^{\frac{n}{2}}\nu\}$. By the first paragraph of the proof we have $C\setminus C_C(g)\subseteq C_1$ and $g^C\subseteq \{g,\nu g,a^{\frac{n}{2}}\nu g\}$.  As $|g^C|$ is a power of $2$ we deduce that $g^C$ is either $\{g,\nu g\}$ or $\{g, a^{\frac{n}{2}}\nu g\}$. Replacing $\nu$ by $a^{\frac{n}{2}}\nu$ in the second case, one may assume that $g^C=\{g,\nu g\}$. Fix $x\in C$ with $g^x=\nu g$. Moreover, as $1\in C_1\cap C_C(g)$, we have that $C\setminus C_C(g)$ is properly included in $C_1$ and hence $C_1$ is a subgroup of $C$ with $|C_1|>\frac{|C|}{2}$. Thus $C=C_1$.

As $G/\GEN{A,\nu}$ is abelian, for every $h\in G$ we have $g^h = ug$ and $x^h = vx$ with $u,v\in \GEN{A,\nu}$. Then $(u,g)=(u,x)=(v,g)=(v,x)=(u,v)=1$ and therefore $\nu^h = (g^h,x^h)=(ug,vx)=(g,x)=\nu$.
Thus $\nu=(g,x)$ is central in $G$ and hence $g$ and $x$ satisfies the conditions of the claim, and once more $\pa{u}{g}\ge 0$.
\end{proof}

\begin{quote}
In the remainder we suppose that (ZC) holds for proper quotients of $G$ but not for $G$. Therefore, there is an element $u\in \Z G$ such that $u$ is not rationally conjugate to any element of $G$ and we assume that the order of $u$ is minimal with this property.
\end{quote}

We fix the following notation: 
$$ m=|u|, \quad f=|\omega_D(u)|, \quad \K=\K_D \qand B=\GEN{a^{\gcd(f,n)}}.$$

Observe that $B$ is the only subgroup of $A$ of order $\gcd(f,n)$.
As $G'\subseteq A\times \GEN{\nu}\subseteq D$ and $f$ divides the exponent of $G/D$, we deduce that $f$ divides the exponent of $G/G'$, which is not multiple of $4$. Thus $4\nmid f$. 
By Proposition~\ref{ZCtrueforD}, $f>1$ and, by Proposition~\ref{ZC1PAugmentation} and Lemma~\ref{aumentofueraD}, there are 
	$$\x,\y \in D, \text{ with }
	\varepsilon_{\x}(u)<0 \text{ and } u^f  \text{ rationally conjugate to } \y.$$
Write

\begin{quote}
	$f=f'f''$, with  
	$\gcd(f'',|D|)=1$ and the prime divisors of $f'$ dividing $|D|$.
\end{quote} 
Then, by Lemma~\ref{NegativeAugmentationAndOrder}.\eqref{PrimesDividingA}, 
\begin{quote}
	$m=m'f''$, with $m'$ and $\Exp(D)$ having the same prime divisors.
\end{quote}
In particular, 
\begin{equation}\label{FraccionIgual}
\frac{\varphi(\Exp(D))}{\Exp(D)} = \frac{\varphi(m')}{m'}.
\end{equation}

Applying \eqref{KsPartialAugmentation} to $N=D$ and $x=\x$ we have 

\begin{align*}
\begin{split}
&\frac{[C_G(\gamma):D]}{f} \;\sum_{K\in \mathcal{K}} \frac{\varphi([D:K])}{|N_G(K)|} \sum_{z\in \gamma^G}|X_{K,\x^f,z}|= \sum_{K\in \K} \varphi([D:K]) \; \mu_{\rho_K(u)}(\psi_K(\x)) \\
&- \frac{\varphi(m)}{m}  \; |C_G(\x)|  \; \varepsilon_{\x}(u) >0,
\end{split}
\end{align*}
which implies that $\K\ne\emptyset$.
Moreover,  if $x$ is an element of $K\cap (A\times \GEN{\nu})$ with $K\in \K$ then $x^2\in K\cap A=1$ and $x\ne a^{\frac{n}{2}}$. Also $K\cap (A\times \GEN{\nu})\ne 1$ because $D/K$ is cyclic. This proves the following:

\begin{equation}\label{KIntersecionAv}
\text{For every } K\in \K, \text{ we have } K\cap G' = K\cap (A\times \GEN{\nu}) = 
\begin{cases}
\GEN{\nu},  \\ \text{ or } \\
\GEN{a^{\frac{n}{2}}\nu}.
\end{cases}
\end{equation}
It follows that $\nu\not\in Z(G)$ and hence $\nu^G=\{\nu,a^{\frac{n}{2}}\nu\}$.  Moreover, for every $g,h\in G$ we have  $Y_{K,g,h}\subseteq K\cap G'$  and hence $|Y_{K,g,h}|\le 2$. Thus, by Lemma~\ref{CardinalXs}, 
$|X_{K,g,h}|$ is either $0$, $|C_G(g)|$ or $2|C_G(g)|$. 
In particular 
\begin{equation}\label{CotaGeneral}
|X_{K,\x^f,z}|\le 2|C_G(\x^f)|, \quad \text{for every }z\in G.
\end{equation}
Furthermore, if $(\x^f,G)\cap \nu^G=\emptyset$ then $Y_{K,\x^f,z}\subseteq K\cap A=1$. If $2\mid f$ then $(\x^f,G)\subseteq A$ and therefore 
\begin{equation}\label{CotaEspecial}
\text{if } 2\mid f  \text{ or } (\x^f,G)\cap \nu^G = \emptyset \text{ then } |X_{K,\x^f,z}|\le |C_G(\x^f)|, \quad \text{for every }z\in G.
\end{equation}
Combining  \eqref{CotaGeneral} and \eqref{CotaEspecial} we obtain 
\begin{equation}\label{CotaX}
|X_{K,\x^f,z}| \le \frac{2}{\gcd(2,f)} \; |C_G(\x^f)| = \frac{2}{\gcd(2,f')} \; |C_G(\x^f)|.
\end{equation}

As $\x\in D$, we have $\GEN{\x^f}=\GEN{\x^{f'}}$ and therefore $C_G(\x^f)=C_G(\x^{f'})$. 
Thus, if $g\in C_G(\x^f)$ then $(\x,g)^{f'} = (\x^{f'},g)=1$ and so $(\x,g)\in B\times \GEN{\nu}$.  
Hence $(\x,C_G(\x^f))\subseteq B\times \GEN{\nu}$ and if $f$ is odd then $(\x,C_G(\x^f))\subseteq B\subseteq A$. This proves that:

\begin{equation}\label{IndexCxfCx}
[C_G(\x^f):C_G(\x)]\le \gcd(2,f)\; \gcd(f,n) = \gcd(2,f')\; \gcd(f',n). 
\end{equation}
Also, if $\gcd(f,n)\ne 1$ then, by Lemma~\ref{NegativeAugmentationAndOrder}.\eqref{NotNormalSubgroup}, $B\not\subseteq (\x,C_G(\x^f))$.
Thus, 
\begin{equation}\label{IndexCxfCxStrict}
\text{if } \gcd(f,n)\ne 1 \text{ then } [C_G(\x^f):C_G(\x)]< \gcd(2,f)\; \gcd(f,n).
\end{equation}

On  one hand, by Lemma~\ref{FormaK}, for every $K\in \K$ we have $[D:K]=\Exp(D)$. Combining this with \eqref{CotaX}, and then using $D\subseteq C_G(\gamma)$ and the inequality $|\K|\le \frac{|D|}{\Exp(D)}$ from Lemma~\ref{FormaK}, and \eqref{FraccionIgual} we obtain
\begin{align}\label{CuentaLarga}
\begin{split}
&\frac{[C_G(\y):D]}{f} \;\sum_{K\in \mathcal{K}} \frac{\varphi([D:K])}{|N_G(K)|} \sum_{z\in \y^G} |X_{K,\x^f,z}| \\
&  \le \frac{2\; [C_G(\y):D]\; \varphi(\Exp(D))\;
	|\y^G|\; |C_G(\x^f)|}{f\; \gcd(2,f')}  \;\sum_{K\in \mathcal{K}} \frac{1}{|N_G(K)|}  \\
&  = \frac{2\; [C_G(\y):D]\;\varphi(\Exp(D))\;
	[G:C_G(\y)]\; |C_G(\x^f)|}{f\; \gcd(2,f')}  \;\sum_{K\in \mathcal{K}} \frac{1}{|N_G(K)|}  \\
 &  \le \frac{2}{f\; \gcd(2,f')} \; \frac{\varphi(\Exp(D))\;
 |C_G(\x^f)|}{|D|} 
 \;\sum_{K\in \mathcal{K}} [G:N_G(K)]  \\
 &  = \frac{2}{f\; \gcd(2,f')} \; 
 \frac{\varphi(\Exp(D))\; |C_G(\x^f)|}{|D|} \;|\K|  \\ 
&  \le \frac{2}{f\; \gcd(2,f')} \; 
\frac{\varphi(\Exp(D))\; |C_G(\x^f)|}{\Exp(D)}   \\
&= \frac{2}{f\; \gcd(2,f')} \; 
\frac{\varphi(m')\; |C_G(\x^f)|}{m'}   \\ 
 &  = \frac{2}{f'\; \gcd(2,f')\;\varphi(f'')} \; 
 \frac{\varphi(f'')\varphi(m')\; |C_G(\x^f)|}{f''\; m'}   \\
 & = \frac{2}{f'\; \gcd(2,f')\;\varphi(f'')} \; 
 \frac{\varphi(m)\; |C_G(\x^f)|}{m}.
\end{split}
\end{align}

On the other hand, the left side of the equality \eqref{KsPartialAugmentation} is  non-negative and $\varepsilon_{\x}(u)\le -1$. This implies that 
\begin{eqnarray}\label{CuentaCorta}
\begin{split}	
\frac{[C_G(\y):D]}{f} \;\sum_{K\in \mathcal{K}} \frac{\varphi([D:K])}{|N_G(K)|} \sum_{z\in \y^G} |X_{K,\x^f,z}|&\ge - \frac{\varphi(m)\; |C_G(\x)|}{m} \varepsilon_{\x}(u) 
\\ &\ge \frac{\varphi(m)\; |C_G(\x)|}{m}.
\end{split}
\end{eqnarray}
Combining \eqref{CuentaLarga}, \eqref{CuentaCorta} and \eqref{IndexCxfCx} we obtain 
\begin{equation}\label{Emparedado}
\frac{f'\;\gcd(2,f')\;\varphi(f'')}{2} \le [C_G(\x^f):C_G(\x)] \le \gcd(2,f')\; \gcd(f',n).
\end{equation}
Thus $\frac{f'}{2}\;\varphi(f'') \le \gcd(f',n)$. In particular, $\varphi(f'')\le 2$ and $\frac{f'}{2}\le \gcd(f',n)$.
The latter implies that if $f'\nmid n$ then $\gcd(f',n)=\frac{f'}{2}$. In this case $f'$ is even and hence $2\mid \gcd(f,n)$. Thus the second inequality in \eqref{Emparedado} is strict, by \eqref{IndexCxfCxStrict}, and then $\frac{f'}{2} < \gcd(f',n)=\frac{f'}{2}$, a contradiction.
Therefore 
	$$\varphi(f'')\le 2, \quad f'\mid n  \text{ so that } |B|=f' \quad \text{ and } \quad \text{if } f'\ne 1 \text{ then } f'' =1.$$

We finish our analyses with the following lemma.

\begin{lemma}\label{Casos}
\begin{enumerate}
	\item\label{fEven} If $f$ is even then $f''=1$, $(\x^f,G)\subseteq A$ and $(\x,C_G(\x^f))\not\subseteq A$.
	\item\label{fOdd} If $f$ is odd then $(\x,C_G(\x^f))\subseteq B$ and $(\x^f,G)\cap \nu^G \ne \emptyset$. Furthermore either $f=f''=3$ or $f''=1$.
	\item\label{(delta,G)}
	$(\x,G)\not\subseteq A$.
	\item\label{nuenD} $G'\subseteq A\times \GEN{\nu}\subseteq D$ and $4\nmid f$.
	\item\label{NotCentralOrder2}  $\nu^G\subseteq (D,G)$.
\end{enumerate}
\end{lemma}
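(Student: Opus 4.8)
The plan is to treat the five parts in the order (4), then the "bookkeeping" halves of (1) and (2), then the two genuinely new inequalities in (1) and (2), then (3), and finally (5), since the later parts feed on the earlier ones.

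Part (4) merely records facts already in hand. That $G'\subseteq A\times\GEN{\nu}$ is a standing hypothesis; that $\nu\in D$ follows from \eqref{KIntersecionAv} (each $K\in\K$ lies in $D$ and meets $G'$ in $\GEN{\nu}$ or $\GEN{a^{\frac{n}{2}}\nu}$, so $\nu\in D$), whence $A\times\GEN{\nu}\subseteq D$; and $4\nmid f$ was shown when observing that $f$ divides the exponent of $G/G'$. For the arithmetic dichotomies I would first record that $f''$ is coprime to $|D|$ and that $2\mid|D|$ (since $A\subseteq D$ and $n$ is even), so $f''$ is odd; with the already-proved $\varphi(f'')\le 2$ this forces $f''\in\{1,3\}$. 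Combined with the established implication "if $f'\ne1$ then $f''=1$", this yields the dichotomy in (2) ($f''=3\Rightarrow f'=1\Rightarrow f=f''=3$); and in (1), when $f$ is even, $f''$ odd forces $2\mid f'$, so $f'\ne1$ and hence $f''=1$. The relations $(\x^f,G)\subseteq A$ for $f$ even (because $(\x^f,g)=(\x,g)^f$ with $(\x,g)\in A\times\GEN{\nu}$ and $\nu^f=1$) and $(\x,C_G(\x^f))\subseteq B$ for $f$ odd (from the derivation of \eqref{IndexCxfCx}) are likewise already available.

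For the new content of (1), namely $(\x,C_G(\x^f))\not\subseteq A$ when $f$ is even, I would argue by contradiction, using that $[C_G(\x^f):C_G(\x)]$ equals the cardinality of the commutator set $(\x,C_G(\x^f))$. If $(\x,C_G(\x^f))\subseteq A$ then, since always $(\x,C_G(\x^f))\subseteq B\times\GEN{\nu}$, in fact $(\x,C_G(\x^f))\subseteq B$. But \eqref{IndexCxfCxStrict} (valid because $\gcd(f,n)=f'\ne1$) gives $B\not\subseteq(\x,C_G(\x^f))$, so the inclusion is proper and $[C_G(\x^f):C_G(\x)]<|B|=f'$, contradicting the lower bound $[C_G(\x^f):C_G(\x)]\ge\frac{f'\gcd(2,f')\varphi(f'')}{2}=f'$ from \eqref{Emparedado}. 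For the new content of (2), $(\x^f,G)\cap\nu^G\ne\emptyset$ when $f$ is odd, I would again suppose the contrary; then \eqref{CotaEspecial} applies and improves \eqref{CotaX} by a factor $2$, so re-running the chain \eqref{CuentaLarga} halves its final bound and, together with \eqref{CuentaCorta} and \eqref{IndexCxfCx}, gives $f'\varphi(f'')\le[C_G(\x^f):C_G(\x)]\le\gcd(f',n)\le f'$. This forces $f''=1$ and $\gcd(f',n)=f'$; if $f'=1$ then $f=1$, against $f>1$, and if $f'\ne1$ then $\gcd(f,n)\ne1$ and \eqref{IndexCxfCxStrict} makes the last inequality strict, again a contradiction. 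Part (3) is then immediate: if $(\x,G)\subseteq A$, then for $f$ even $(\x,C_G(\x^f))\subseteq(\x,G)\subseteq A$ contradicts (1), while for $f$ odd $(\x^f,G)\subseteq A$ forces $(\x^f,G)\cap\nu^G=\emptyset$ since $\nu^G\cap A=\emptyset$, contradicting (2).

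The hard part will be (5). I would first note that $(D,G)$ is closed under $G$-conjugation (as $D\unlhd G$) and that $\nu^G=\{\nu,a^{\frac{n}{2}}\nu\}$ is a single conjugacy class, so it suffices to produce one element of $\nu^G\cap(D,G)$. For $f$ odd this is exactly part (2), since $\x^f\in D$. For $f$ even the obstacle is that (1) only yields $h\in C_G(\x^f)$ with $(\x,h)=a^i\nu\notin A$, and a priori $a^i\nu\notin\nu^G$. The key idea I would use is to exploit $h\in C_G(\x^f)$: this gives $1=(\x^f,h)=(\x,h)^f=a^{if}$ (as $\x$ commutes with $(\x,h)\in D$), so $n\mid if$; since $f$ is even with $4\nmid f$, this pushes the $2$-part of $a^i$ into $\GEN{a^{\frac{n}{2}}}=\{1,a^{\frac{n}{2}}\}$, whence the $2$-part of $a^i\nu$ lies in $\{\nu,a^{\frac{n}{2}}\nu\}=\nu^G$. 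Finally this $2$-part is $(a^i\nu)^{k_0}=(\x,h)^{k_0}=(\x^{k_0},h)$ for the odd integer $k_0$ extracting $2$-parts, and $\x^{k_0}\in D$, so it is a genuine element of $(D,G)$; thus $\nu^G\cap(D,G)\ne\emptyset$ and conjugation-invariance gives $\nu^G\subseteq(D,G)$. The delicate bookkeeping is identifying $|(\x,C_G(\x^f))|$ with the index $[C_G(\x^f):C_G(\x)]$ and deploying \eqref{IndexCxfCxStrict} to turn the sandwich \eqref{Emparedado} into a strict contradiction; the genuinely new step is the passage to the $2$-part in the $f$-even case, which keeps us inside the commutator set $(D,G)$ precisely because powers of $\x$ remain in $D$.
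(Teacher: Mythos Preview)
Your proof is correct and follows essentially the same strategy as the paper. The reordering (proving (4) first via $\K\ne\emptyset$ and \eqref{KIntersecionAv} rather than via (3)) is harmless, and your treatment of (1)--(3) reproduces the paper's use of \eqref{Emparedado}, \eqref{IndexCxfCxStrict}, and \eqref{CotaEspecial} verbatim.

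The only noteworthy variation is in part (5) for the case $f$ even. The paper passes to the $2$-part of $\x$: since $(\x_2,g)\in D_2$ is a $2$-element of order dividing $f$ (whose $2$-part is $2$), it has order at most $2$, and being outside $A$ forces $(\x_2,g)\in\nu^G$. You instead pass to the $2$-part of the commutator $(\x,h)=a^i\nu$, realizing it as $(\x,h)^{k_0}=(\x^{k_0},h)$ for an odd $k_0$. Both routes work; the paper's is marginally cleaner because $(\x_2,g)\in D_2$ immediately gives a $2$-element, avoiding your extra step of checking that $|a^i\nu|$ is even so that $k_0$ can be chosen odd. Your closing step---one element of $\nu^G$ in $(D,G)$ forces $\nu^G\subseteq(D,G)$ by conjugation-invariance---is exactly what the paper does as well.
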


\begin{proof}
We first observe that by Lemma~\ref{NegativeAugmentationAndOrder}.\eqref{NotNormalSubgroup}, $B\times \GEN{\nu}\not\subseteq (\x,C_G(\x^f))$ and if $f'\ne 1$ then $B\not\subseteq (\x,C_G(\x^f))$.
	
\eqref{fEven} If $f$ is even then $(\x^f,G)\subseteq A$ and $f'\ne 1$.
Then $f''\le 2$ and, as $\gcd(f',f'')=1$, we deduce that $f''=1$.
Thus  $|B|=f\ne 1$ and hence  $B\not\subseteq (\x,C_G(\x^f))$ by Lemma~\ref{NegativeAugmentationAndOrder}.\eqref{NotNormalSubgroup}.  
By \eqref{Emparedado}
	$$f\le [C_G(\x^f):C_G(\x)]=|\x^{C_G(\x^f)}| = |(\x,C_G(\x^f))|.$$
Therefore $(\x,C_G(\x^f))\not\subseteq A$ for otherwise $(\x,G)$ contains $B$. 
	
\eqref{fOdd} If $f$ is odd then, $(\x,C_G(\x^f))\subseteq B$. 
If $f'=1$ then $f''\ne 1$ and hence $\varphi(f)=\varphi(f'')=2$. Then $f''=3$.

We now prove that $(\x^f,G)\cap \GEN{\nu}\ne \emptyset$.
Otherwise
$|X_{K,\x^f,z}|\le |C_G(\x^f)|$ for every $K\in \K$ and $z\in G$, by \eqref{CotaEspecial}. 
Using this in the calculations in \eqref{CuentaLarga} we get $f'\varphi(f'')\le [C_G(\x^f):C_G(\x)]\le f'$. Therefore $\varphi(f'')=1$ and $[C_G(\x^f):C_G(\x)]=f'$. 
As $f$ is odd this implies that $f''=1$. 
Therefore $1\ne f'=\gcd(f,n)$ and hence $f=[C_G(\x^f):C_G(\x)]<f'$, by \eqref{IndexCxfCxStrict}.
This yields a contradiction and finishes the proof of \eqref{fOdd}. 

\eqref{(delta,G)} If $f$ is even then \eqref{(delta,G)} follows from \eqref{fEven}. 
So, if $(\x,G)\subseteq A$ then $f$ is odd. Moreover, for every $g\in G$ we have $(\x^f,g)=(\x,g)^f \in A$ in contradiction with \eqref{fOdd}. 
This finishes the proof of \eqref{(delta,G)}. 

\eqref{nuenD} By \eqref{(delta,G)}, $G'\subseteq A\times \GEN{\nu} \subseteq \GEN{A,(\x,G)} \subseteq D$. Then as $f$ divides $\Exp(G/D)$, it also divides  $\Exp(G/G')$ and the latter is not multiple of $4$. So $4\nmid f$.

\eqref{NotCentralOrder2} 
Suppose that $(D,G)\cap \nu^G= \emptyset$. 
In particular, $(\x^f,G)\cap \nu^G=\emptyset$ and hence $f$ is even by \eqref{fOdd}.
Then  $(\x,C_G(\x^f))\nsubseteq A$ by \eqref{fEven}. 
Let $g\in C_G(\x^f)$ with $(\x,g)\not\in A$. 
Since $(\x_{2'},g)\in A$, $1=(\x_2^f,g)=(\x_2,g)^f$ and $4\nmid f$, by \eqref{(delta,G)}, we have $(\x_2,g)^2=1$.
Hence $(\x_2,g)\in \Soc(G'_2)\setminus A  = \nu^G$, in  contradiction with the assumption. This proves that there are $x\in D$ and $y\in G$ such that 
$(x,y)$ is either $\nu$ or $a^{\frac{n}{2}}\nu$.
As $\nu$ is not central in $G$ there is $g\in G\setminus C_G(\nu)$.
Then $a^{\frac{n}{2}}(x,y)=(x,y)^g=(x^g,y^g)$, which is the other element of $\nu^G$.
Thus $\nu^G\subseteq (D,G)$.
\end{proof}

%

\begin{proofof}{\it Proposition~\ref{MinimalCounterexampleHamiltonian}}.
	By Lemma~\ref{Casos}.\eqref{NotCentralOrder2} there are $x\in D$ and $y\in G$ such that $(x,y)=\nu$. Then $(x_2,y_2)=\nu$ and hence one may assume that $x$ and $y$ are $2$-elements. As $G/A$ is Hamiltonian it easily follows that $\GEN{x,y,A}/A$ is isomorphic to the quaternion group of order 8 and hence $(x,y)\in x^2 \GEN{a}$, as $D_2$ is normal in $G$. 
	
    Let $v$  be a generator of $A_2$ and $n_2=|A_2|$.
	For every $h\in G$ there is an odd integer $l_h$ such that $v^h=v^{l_h}$. 
	As $(h_{2'},v)=1$, and $h_2^2 \in \GEN{v,\nu}\subseteq D$ we have $l_h\equiv \pm 1 \mod \frac{n_2}{2}$. 
	Thus $(v^2)^h = v^{\pm 2}$ and if $l_h\equiv 1 \mod \frac{n_2}{2}$ then $(v^2)^{l_h} = v^2$.
	
	Therefore $x^2\equiv y^2 \equiv (x,y) = \nu \mod A$ and hence $1=\nu^2 = (x,y)^2 = (x^2,y)$ and $y^2\in \GEN{A,\nu}\subseteq D$.
	Thus $\nu\nu^y = (x,y)(x,y)^y = (x,y^2)=1$ and hence $(\nu,y)=1$. 
	As $\nu\not\in Z(G)$ and $\nu$ commutes with $x$, $y$ and the elements of odd order of $G$, there is a $2$-element $w$ of $G$ such that $\nu^w = v^{\frac{n_2}{2}}\nu$.
	Furthermore, there are integers $j,r$ and $s$ satisfying the following:
	$$x^y=x^{-1}v^j, \hspace{0.5cm} x^w = v^rx, \hspace{0.5cm} y^w = v^sy.$$
	Then 
	$$x=x^{y^2} = \left(x^{-1}v^j \right)^{-1} \left(v^j\right)^y = x v^{j(l_y-1)}.$$
	Hence 
	\begin{equation}\label{jl=j}
	n_2\mid j(l_y -1).
	\end{equation}
	Moreover
	$$v^{\frac{n_2}{2}} \nu = \nu^w = (x^w,y^w) = (v^rx,v^sy) = (v^r,y) \nu.$$
	Therefore $v^{\frac{n_2}{2}} = (v^r,y)$. In particular,
	\begin{equation}\label{lgNo1}
	l_y\not\equiv 1 \mod n_2
	\end{equation} and one of the following conditions holds:
	\begin{equation}\label{Posibleslgr}
	\matriz{{lllll}
		8\mid n_2, & l_y=1+\frac{n_2}{2} & \text{ and } & 2\nmid r & \text{ or } \\
		4\mid n_2, & l_y\equiv -1 \mod \frac{n_2}{2} & \text{ and } & r\equiv \pm \frac{n_2}{4} \mod n_2.} 
	\end{equation}
	From \eqref{jl=j} and \eqref{lgNo1} it follows that $j$ is even and hence $(v^j)^h=v^{\pm j}$ for every $h\in G$. 
	Moreover $\nu=(x,y)=x^{-2}v^j$ has order $2$ and hence $x^4 = v^{2j}$. 
	
	We claim that $l_y\equiv -1 \mod \frac{n_2}{2}$. Otherwise, by \eqref{Posibleslgr} we have $8\mid n_2$, $l_y=1+\frac{n_2}{2}$ and $r$ is odd. 
	Then 
	\begin{eqnarray*}
		x^{-1}v^{j+r(1+\frac{n_2}{2})} &=& 	(v^r x)^y= (v^r x)^{v^sy} = (x^w)^{y^w} = (x^y)^w = (x^{-1}v^j)^w \\ &=& (v^r x)^{-1} v^{\pm j} = x^{-1} v^{-r\pm j}.
	\end{eqnarray*}
	Thus $2r \equiv - j\pm j \mod \frac{n_2}{2}$. 
	As $r$ is odd and $4$ divides $\frac{n_2}{2}$, we have  $2\equiv 2r \equiv -j\pm j \equiv 0 \mod 4$, a contradiction.	
	Thus $j\equiv jl_g\equiv -j \mod n_2$, i.e. $n_2\mid 2j$. Then $x^4 = v^{2j}=1$ and $x^y \in x^{-1}\GEN{v^{\frac{n_2}{2}}}$. Therefore $x^2 \nu = x^2(x,y)=xx^y\in \GEN{v^{\frac{n_2}{2}}}$ and hence $x^2\in \nu^G$. 
	
	So $l_y\equiv -1 \mod \frac{n_2}{2}$, $4\mid n_2$ and $x^w = v^{\pm \frac{n_2}{4}} x$. 
	Moreover, if $8\nmid n_2$ then $l_y=-1$.
	Then $C_{\GEN{v}}(y)=\GEN{v^{\frac{n_2}{2}}}$. 
	Thus $y^2x^2\in C_{\GEN{v}}(y)=\GEN{v^{\frac{n_2}{2}}}$, i.e. $x^2=y^2 v^k$ with $k=0$ or $k=\frac{n_2}{2}$. 
	Then
	$$v^{\frac{n_2}{2}+k}y^2 = v^{\frac{n_2}{2}} x^2 = (v^{\frac{n_2}{4}}x)^2 = 
	(x^w)^2 = (y^w)^2 v^k = 
	(v^sy)^2 v^k = v^{s(1+l_y)+k}y^2$$
	Therefore $s(1+l_y)\equiv \frac{n_2}{2} \not\equiv 0 \mod n_2$.
	This is not compatible with $l_y=-1$ and hence $8\mid n_2$, $l_y\equiv \frac{n_2}{2}-1 \mod n_2$, i.e. $v^y=v^{\frac{n_2}{2}-1}$, and $s$ is odd, i.e. $A_2=\GEN{(y,w)}$.
	As $w^2\in A$ we have $x=x^{w^2} = v^{\pm (l_w+1)\frac{n_2}{4}}x$. Hence $l_w \equiv -1 \mod 4$ and hence $l_w\equiv -1 \mod \frac{n_2}{2}$.
	As $w$ commutes with $w^2$ we deduce that $w^2 \in \GEN{v^{\frac{n_2}{2}}}$ and hence $y=y^{w^2} = v^{(l_w+1)s} y$. As $s$ is odd and $8\mid n_2$ we have $l_w\equiv -1\mod n_2$, i.e. $v^w=v^{-1}$.

	Summarizing $8\mid n_2$ and $v,w,x,y$ satisfy the following properties:
	$$\matriz{{l}
		w^2 \in \GEN{v^{\frac{n_2}{2}}}, v^w = v^{-1}, |x|=4, (v,x)=1, x^w = v^{\pm \frac{n_2}{4}} x, x^2 y^2 \in \GEN{v^{\frac{n_2}{2}}}, \\
		v^y = y^{\frac{n_2}{2}-1}y, (w,y)=v^r, (x,y)\in x^2 \GEN{v^{\frac{n_2}{2}}}}$$
	Replacing $w$ by either $w^{-1}$, $x^2w$ or $x^2w^{-1}$ one may assume that $w^2=1$ and $x^w = v^{\frac{n_2}{4}}x$. Then replacing $y$ by $v^{\frac{n_2}{4}}y$ one may assume that $x^2=y^2$. 
	Finally, replacing $v$ by $v^r$ one may assume that $v=(w,y)$. This finishes the proof of Proposition~\ref{MinimalCounterexampleHamiltonian}.
\end{proofof}

\begin{proofof}{\it Theorem~\ref{CyclicByHamiltonian}}. 
Suppose that (ZC) fails for $G$ and let $N$ be a subgroup of $G$ which is maximal among the normal subgroups of $G$ such that (ZC) fails for $G/N$. Let us use bar notation for reduction modulo $N$. 
Then $\overline{G}$ and $\overline{A}$ satisfy the conditions of Proposition~\ref{MinimalCounterexampleHamiltonian} and therefore $|\overline{A}|$ is multiple of $8$, $[\overline{G}:\overline{A}]$ is multiple of $16$ and the image of the action by conjugation of $\overline{G}$ on the $2$-Sylow subgroup of $\overline{A}$ has order multiple of $4$. 
Then $|A|$ is multiple of $8$, $[G:A]$ is multiple of $16$ and the image of the action by conjugation of $G$ on $A_2$ has at least four elements. 
However, as $G/A$ is Hamiltonian, $(g^2,A_2)=1$ for every $g\in G$. This implies that the image of that action has at most four elements and hence it has exactly $4$ elements.
Thus $G$ satisfy neither of the conditions (1), (2) nor (3) of Theorem~\ref{CyclicByHamiltonian}. 
\end{proofof}

\textbf{Acknowledgment}. We thanks Wolfgang Kimmerle for suggesting us to study the Zassenhaus Conjecture for cyclic-by-Hamiltonian groups. 

\bibliographystyle{amsalpha}
\bibliography{ReferencesMSC}

\end{document}